\documentclass[10pt]{article}
\usepackage{amssymb}
\usepackage[dvipsnames]{xcolor}

\usepackage{amsmath, amssymb, amsthm, mathtools}
\usepackage{tikz}
\usetikzlibrary{arrows.meta}
\usetikzlibrary{positioning}
\usepackage{caption, subcaption, float}
\usepackage[makeroom]{cancel}
\usepackage[all, knot]{xy}
\usepackage{graphicx}
\usepackage{xcolor}
\usepackage{todonotes}
\usepackage{enumitem}
\DeclareSymbolFont{symbolsC}{U}{txsyc}{m}{n}
\DeclareMathSymbol{\strictif}{\mathrel}{symbolsC}{74}

\newtheorem{theorem}{Theorem}[section]

\newtheorem*{theorem*}{Theorem} 
\newtheorem*{lemma*}{Lemma}

\theoremstyle{definition}
\newtheorem{definition}[theorem]{Definition}

\newtheorem{proposition}[theorem]{Proposition}

\newtheorem{corollary}[theorem]{Corollary}

\theoremstyle{remark}

\theoremstyle{remark}

\newcommand\restr[2]{{
\left.\kern-\nulldelimiterspace 
#1 
\vphantom{\big|} 
\right|_{#2} 
}}

\newcommand{\A}{\mathcal{A}}

\newcommand{\K}{\mathcal{K}}

\newcommand{\M}{\mathcal{M}}
\newcommand{\N}{\mathbb{N}}

\newcommand{\U}{\mathbb{U}}

\let\phi\varphi
\title{On the sunflower property and the galah property}
\author{Cheng Liao}
\date{}
\begin{document}

\maketitle

\begin{abstract}
Sunflowerability, or the infinite sunflower property, was introduced and studied by Ackerman, Karker and Mirabi as a structural generalization of the well-known $\Delta$-system lemma for sets. It turns out that for relational Fra\"iss\'e limits with strong amalgamation, this property is equivalent to the so-called galah property, which was introduced by Sullivan and Winkel as an asymmetric variation of indivisibility. This paper is about these two properties and is divided into three parts. In the first part, we show that the conjecture proposed by Ackerman, Karker and Mirabi about the infinite sunflower property in higher dimensions is far from being true by proving that no infinite structure has the infinite $n$-sunflower property in dimension $k$ for any $n,k\geq 2$. In the second part, we give a complete characterization of the galah property for Henson directed graphs, homogeneous metric spaces and homogeneous ultrametric spaces, thereby answering the second question asked by Sullivan and Winkel. The third part contains several additional results about the finite sunflower property, including a strengthening of recent results about indivisibility for some classes of undirected graphs obtained by Guingona \textit{et al.}.
\end{abstract}

\section{Introduction}

A \textit{sunflower} or a \textit{$\Delta$-system} is a collection of sets in which any two distinct sets have the same intersection (called the root). The well-known $\Delta$-system lemma proved by Erd\"os and Rado states that for every infinite set of sets of size $n$, there is a subset of the same cardinality that forms a $\Delta$-system. Instead of considering pure sets, Ackerman, Karker and Mirabi thought about the possibility of imposing some extra structure on the set we consider \cite{1}. This led them to the following definition:

\begin{definition}\cite[Def. 6]{1}
    For any $n\in\mathbb{N}^+$, an infinite first-order structure $\mathcal{M}$ is \textit{$n$-sunflowerable} or has the \textit{infinite $n$-sunflower property} if for any structure on $n$-sets $\M'$ with $\M'\cong \M$, $\M'$ contains a substructure $\mathcal{N}$ such that $\mathcal{N}\cong \M$ and $\mathcal{N}$ is a sunflower (as a set of $n$-sets). $\M$ has the \textit{infinite sunflower property} if $\M$ has \textit{infinite $n$-sunflower property} for any $n\in \mathbb{N}^+$.
\end{definition}

It is immediate that every infinite structure $\M$ has the infinite $1$-sunflower property. And it is proved that for relational Fra\"iss\'e limits with strong amalgamation, the infinite 2-sunflower property is equivalent to the infinite sunflower property \cite{2}. In order to prove this, the authors introduced what they called ``the galah property" which is an asymmetric variation of the well-studied partition property \textemdash  indivisibility.

\begin{definition}\label{indiv}
    A structure $\M$ is \textit{indivisible} if for any partition $(A,B)$ of $\M$, either $A$ contains a copy of $\M$ or $B$ contains a copy of $\M$.

    A structure $\M$ has the \textit{galah property} if for any partition $(A,B)$ of $\M$, either $A\cong \M$ or $B$ contains a copy of $\M$ \cite[Def. 2.1]{2}.
\end{definition}

Indivisibility was introduced by Fra\"iss\'e in the fifties and has been well studied since then because of its connection to Ramsey theory, topological dynamics and so on \cite{s,ms,cs,g}. It is clear that the galah property implies indivisibility, and it is also easy to see that in general indivisibility does not imply the galah property (e.g., a countably infinitely set with an infinite equivalence relation such that each equivalence class is infinite). What makes the galah property interesting is the following result:

\begin{theorem}\cite[Prop 2.15 and Thm A]{2}
    Let $\M$ be a relational Fra\"iss\'e limit with strong amalgamation. Then the following are equivalent:
\begin{enumerate}
    \item [1)] $\M$ has the galah property.
    \item [2)] $\M$ has the infinite 2-sunflower property.
    \item [3)] $\M$ has the infinite sunflower property.
    \item [4)] $\M$ is locally replicable\footnote{See Definition 3.3.}.
    \item [5)] $\M$ has the canonical infinite point-Ramsey property (i.e., every coloring of $\chi: M \rightarrow \mathbb{N}$ has a monochromatic or heterochromatic\footnote{Every two different elements have different colors.} copy of $\M$).
\end{enumerate}
    
\end{theorem}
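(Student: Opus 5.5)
The plan is to prove the cycle $3)\Rightarrow 2)\Rightarrow 5)\Rightarrow 1)\Rightarrow 4)\Rightarrow 3)$, using local replicability (Definition 3.3) as the combinatorial hub. Throughout, $\M$ is a relational Fra\"iss\'e limit with strong amalgamation. I will use two standard facts about such $\M$. First, $\M$ has no algebraicity, so every nonalgebraic one-point type over a finite set has infinitely many realizations. Second, a subset $X\subseteq M$ is isomorphic to $\M$ as soon as $X$ satisfies the one-point extension property. I will work with the reading of local replicability as an extension-type condition: roughly, every finite configuration together with a one-point extension over it can be \emph{replicated} inside any large enough piece of $\M$. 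If Definition 3.3 differs in detail, the last two steps would need adjusting, but I expect the same strategy to apply.

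$3)\Rightarrow 2)$ is immediate. For $2)\Rightarrow 5)$, take a colouring $\chi\colon M\to\mathbb{N}$ and fix a set of "colour points" $\{c_i:i\in\mathbb{N}\}$ disjoint from $M$. Transport $\M$ to a structure $\M'$ on $2$-sets via $x\mapsto\{x,c_{\chi(x)}\}$; this map is injective, so $\M'\cong\M$. By 2), $\M'$ contains a sunflower $\mathcal N\cong\M$, and its root has size $0$ or $1$.
\begin{itemize}
\item If the root is empty, the sets are pairwise disjoint, so the colours are pairwise distinct and we have a heterochromatic copy.
\item If the root has size $1$, it cannot be a point of $M$, because the $M$-coordinates are distinct. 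So it is a single $c_i$, and we have a monochromatic copy.
\end{itemize}

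For $5)\Rightarrow 1)$, let $(A,B)$ be a partition and suppose $B$ contains no copy of $\M$. Colour $B$ with one colour and $A$ injectively. The key point is that this alone only yields a copy of $\M$ inside $A\cup\{b\}$, whereas we need $A\cong\M$ itself. So I will verify the extension property for $A$ directly. Fix a finite $F\subseteq A$ and a one-point extension type $p$ over $F$, and assume $p$ is unrealized in $A$. Then I will build a cleverer colouring: points of $B$ realizing $p$, or realizing types over $F$ linked to $p$, get one colour, and everything else is coloured injectively. Strong amalgamation should then let me pull back a monochromatic or heterochromatic copy and derive that $B$ contains a copy of $\M$, which is a contradiction. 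Making this colouring work is the first delicate step.

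For $1)\Rightarrow 4)$, I argue by contraposition. If local replicability fails at some finite configuration, I will split $\M$ into $A$ and $B$ so that $A$ omits the required extension, so $A\not\cong\M$, while $B$ is thin enough near that configuration that it cannot contain a copy of $\M$.

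For $4)\Rightarrow 3)$, given $\M'\cong\M$ on $n$-sets, I build a sunflower copy by a back-and-forth / diagonal construction. Enumerate the elements and the finitely many one-point types needed to ensure the extension property. At each stage, first apply Erd\H{o}s--Rado, i.e.\ the $\Delta$-system lemma, to the infinitely many candidate realizations, so as to keep a $\Delta$-subsystem whose root is compatible with the root fixed so far. Then use local replicability to guarantee that a realization of each required type survives this thinning. I will induct on $n$, passing to "petals minus root" to reduce the petal size, and I expect $n=2$ to be the essential case.

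I expect the main obstacle to be the two steps involving $4)$, and most of all $4)\Rightarrow 3)$. There, one must ensure that the root stabilizes to a single finite set and that the required extensions are still available after infinitely many thinnings. My first attempt will be to fix the root at the very first stage, by choosing the root of an infinite $\Delta$-subsystem among realizations of the empty type. After that, every later choice is restricted to sets whose intersection with the previously chosen sets is exactly this root, and local replicability is invoked to show that such choices always exist.
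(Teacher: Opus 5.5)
The paper does not prove this theorem; it quotes it from \cite{2}. Only your $2)\Rightarrow 5)$ colouring reappears there (Claim 1 in Section 2), and that step is correct, as is $3)\Rightarrow 2)$.

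Definition 3.3 is concrete: $\Gamma(a/F)=\{m\in M\mid \mathrm{qftp}(m/F)=\mathrm{qftp}(a/F)\}$ must contain a copy of $\M$ for every finite $F$ and every $a\in M\setminus F$. With it, $1)\Leftrightarrow 4)$ is immediate. The set $M\setminus\Gamma(a/F)$ cannot be isomorphic to $\M$, since the extension property applied to the inclusion of $F$ would realize $\mathrm{qftp}(a/F)$ inside it. Conversely, if $A\not\cong\M$, then $A$ omits some $\mathrm{qftp}(a/F)$ with $F\subseteq A$ finite, so $\Gamma(a/F)\subseteq B$.

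Your steps $5)\Rightarrow 1)$ and $4)\Rightarrow 3)$, however, have genuine gaps.

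\textbf{The step $5)\Rightarrow 1)$.} No colouring made of one special class (a union of types over the fixed $F$) plus injective colours elsewhere can work. Take the generic $K_4$-free graph, an edge $F=\{x,y\}$, and $p=$ ``adjacent to both''. Then $\Gamma(p)$ is independent, so $(M\setminus\Gamma(p),\Gamma(p))$ witnesses the failure of the galah property. Yet the common non-neighbours of $x$ and $y$ also form a copy of $\M$, and they realize a single type over $F$. So this copy either avoids your special class, and is heterochromatic, or lies inside it, and is a monochromatic copy not contained in $B$. Either way you get no contradiction.

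The missing idea is to use \emph{all} copies of $F$ at once; this proves $5)\Rightarrow 4)$, and $4)\Rightarrow 1)$ then follows as above. Assume $\Gamma(a/F)$ contains no copy of $\M$. Let $(\bar f_j)_j$ enumerate all tuples with $\mathrm{qftp}(\bar f_j)=\mathrm{qftp}(\bar f)$, and put $\Gamma_j=\{m\mid \mathrm{qftp}(m\bar f_j)=\mathrm{qftp}(a\bar f)\}$. Colour $m$ by the least $j$ with $m\in\Gamma_j$, and use fresh injective colours outside $\bigcup_j\Gamma_j$.
\begin{itemize}
\item By homogeneity each $\Gamma_j$ is an automorphic image of $\Gamma(a/F)$, so there is no monochromatic copy.
\item Any copy $\phi[\M]$ contains $\phi(\bar f)=\bar f_j$ for some $j$, together with $\phi[\Gamma(a/F)]\subseteq\Gamma_j$. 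This set is infinite by no algebraicity, which is where strong amalgamation enters. All its colours lie in $\{0,\dots,j\}$, so the copy is not heterochromatic.
\end{itemize}

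\textbf{The step $4)\Rightarrow 3)$.} Fixing the root at the first stage via a $\Delta$-subsystem fails. Local replicability controls types, not labels, and a $\Delta$-subsystem is an arbitrary infinite subfamily that need not contain realizations of later types. For example, in the Rado graph label each point $x$ of an infinite independent set $I$ by $\{w,x\}$, and label all other points by pairwise disjoint fresh pairs. The $\Delta$-lemma may return root $\{w\}$. After that every later point must lie in $I$, which cannot host an edge.

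The root has to be found adaptively. Write $\iota(x)$ for the label of $x$, and build the embedding greedily, aiming at pairwise disjoint labels. At a stage with finite image $E$ and used label-points $U$, take a copy $N\cong\M$ inside the set $\Gamma(c/E)$ of candidates for the next point; this is where 4) is used. If some point of $N$ has label disjoint from $U$, use it. Otherwise $N$ is covered by the finitely many stars $\{x\mid u\in\iota(x)\}$, $u\in U$. Indivisibility for finite partitions, which follows from 1) and hence from 4), gives a copy inside a single star. Deleting $u$ from those labels reduces to $n-1$, and induction on $n$ yields a sunflower with root $R\cup\{u\}$. If you never get stuck, you obtain a copy with empty root.
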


In this paper, we answer two questions about the infinite sunflower property and the galah property raised in \cite{1} and \cite{2}. In particular, we disprove the conjecture that the infinite sunflower property is equivalent to its higher dimensional versions \cite[Conj. 62]{1} by showing that no infinite structure can have the infinite 2-sunflower property in dimension higher than 2. We also give a complete characterization of the galah property for $H_\mathcal{T}$ \textemdash the Fra\"iss\'e limit of all finite $\mathcal{T}$-free directed graphs, homogeneous metric spaces and homogeneous ultrametric spaces. This answers the second question in \cite{2} and helps us understand the place of those structures with the galah property in the landscape of indivisible structures. In the end, we will also prove some results about the finite sunflower property \textemdash the finite analogue of the infinite sunflower property. In particular, we introduce the \textit{large-girth amalgamation property}\footnote{See Definition 4.9.} for classes of finite structures, and use it to prove the finite sunflower property for the class of all finite $\A$-free graphs where $\A$ is a set of finite \textit{non-2-covered}\footnote{See remarks before Theorem \ref{non-2}.} graphs, which strengthens results about indivisibility obtained in \cite{g}.

\section{Higher-dimensional infinite sunflower property}

We start with the definition of the infinite sunflower property in higher-dimensional settings. Throughout this paper, substructures are always induced substructures. Hence in particular, subgraphs are  induced subgraphs. In the following, $ \mathcal{P}_n(S)$ denotes the set of all $n$-subsets of $S$, and $|S|$ denotes the cardinality of $S$. For  any structure $\M$, $dom(\M)$ is its underlying set which we may also denote as $M$ when no confusion is caused.

\begin{definition}
   Let $\M$ be an infinite structure and $k,n\in\mathbb{N}^+$. We say that $\M$ has the infinite $n$-sunflower property in dimension $k$ if whenever $f: M^k\rightarrow \mathcal{P}_n(S)$ is an injection where $S$ is a set, there is a substructure $\M_0 \subseteq \M$ such that $\M_0\cong \M$ and $f[M^k_0]$ is a sunflower.
\end{definition}

In \cite{1}, it was conjectured that for any infinite structure $\M$ and any $k,n\in\mathbb{N}^+$, $\M$ has the infinite $n$-sunflower property in dimension $k$ if and only if $\M$ has the infinite $n$-sunflower property. In this section, we show that this conjecture is false.

First, the following proposition is more or less obvious.

\begin{proposition}
 For any structure $\M$ and any $n,k\in\mathbb{N}^+$, if $\M$ has the infinite $n+1$-sunflower property in dimension $k$, then $\M$ has the infinite $n$-sunflower property in dimension $k$.
\end{proposition}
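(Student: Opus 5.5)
Given an injection $f: M^k\rightarrow \mathcal{P}_n(S)$, the plan is to pad every image set with one fresh point. That turns $f$ into an injection into $\mathcal{P}_{n+1}$ of a larger set, and then we apply the hypothesis. Concretely, we fix a set $T$ disjoint from $S$ together with a bijection $x\mapsto t_x$ from $M^k$ onto $T$. We then define
\[
g: M^k\rightarrow \mathcal{P}_{n+1}(S\cup T),\qquad g(x)=f(x)\cup\{t_x\}.
\]
Since $t_x\notin S\supseteq f(x)$, each $g(x)$ has exactly $n+1$ elements. Moreover $g$ is injective, because $g(x)\cap S=f(x)$ and $f$ is injective (the injectivity of $x\mapsto t_x$ would also suffice).

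By the infinite $(n+1)$-sunflower property in dimension $k$, there is a substructure $\M_0\subseteq\M$ with $\M_0\cong\M$ such that $g[M_0^k]$ is a sunflower with some root $R$. The remaining step is to show that $f[M_0^k]$ is a sunflower with root $R\cap S$. Take distinct $x,y\in M_0^k$. Then
\[
g(x)\cap g(y)=(f(x)\cap f(y))\cup\big(f(x)\cap\{t_y\}\big)\cup\big(\{t_x\}\cap f(y)\big)\cup\big(\{t_x\}\cap\{t_y\}\big).
\]
The last three terms are empty: the first two because $T\cap S=\emptyset$, and the third because $t_x\neq t_y$. Hence $g(x)\cap g(y)=f(x)\cap f(y)$, so $f(x)\cap f(y)=R$ for every pair of distinct $x,y$. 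Since $f$ is injective, distinct points of $M_0^k$ give distinct sets, so $f[M_0^k]$ is a sunflower with root $R$. The same $\M_0$ therefore witnesses the infinite $n$-sunflower property in dimension $k$.

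There is no real obstacle in this argument. The only points needing care are that the padding points must be distinct for distinct tuples and disjoint from $S$, which is what makes the intersections unchanged, and that injectivity of $g$ is preserved.
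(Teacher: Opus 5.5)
Your proof is correct and follows the paper's approach: pad each $n$-set with a fresh point outside $S$, apply the $(n+1)$-sunflower property in dimension $k$, and strip the padding. The only difference is that the paper adds a single common point $t\notin S$ to every set (so the root becomes $X\setminus\{t\}$), whereas your distinct points $t_x$ leave all pairwise intersections, and hence the root, unchanged; either choice works.
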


\begin{proof}
    Let $n,k\in\mathbb{N}^+$ be arbitrary,  and suppose $\M$ has the infinite $n+1$-sunflower property in dimension $k$. Let $f: M^k\rightarrow \mathcal{P}_n(S)$ be an injection where $S$ is a set, then choose an element $t$ not in $S$ and define $g: M^k\rightarrow \mathcal{P}_{n+1}(S\cup \{t\})$ as follows: $g(a_1,...,a_k)=f(a_1,...,a_k)\cup\{t\}$. Clearly $g$ is an injection, and thus by assumption, there exists $\M_0\subseteq \M$ such that $\M_0\cong \M$ and $g[M_0^k]$ is a sunflower, say with $X$ as the common intersection. Then $f[M_0^k]$ is a sunflower with $X\setminus \{t\}$ as the common intersection. Hence $\M$ has the infinite $n$-sunflower property in dimension $k$.
\end{proof}

Now we prove that for any $k\geq 2$, no infinite structure has the infinite $2$-sunflower property in dimension $k$.

\begin{proposition}
For any $k\geq 2$, no infinite structure has the infinite $2$-sunflower property in dimension $k$.   
\end{proposition}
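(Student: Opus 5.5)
The plan is to build one explicit injection $f$ that works for every infinite structure. For any subset $A\subseteq M$ with $|A|\geq 2$, $f[A^k]$ should fail to be a sunflower. Since any $\M_0\subseteq \M$ with $\M_0\cong\M$ is infinite, this gives the claim. The idea is to split the $k$-tuple into a "first coordinate" part and a "remaining coordinates" part, and to tag the two parts so they cannot collide.

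Concretely, fix $k\geq 2$ and put $S=(M\times\{0\})\cup(M^{k-1}\times\{1\})$. Define
\[
f(a_1,\dots,a_k)=\{(a_1,0),\,((a_2,\dots,a_k),1)\}.
\]
The two elements always have different tags, so each value is a genuine $2$-set. The map is injective, because from the set we can read off $a_1$ (the tag-$0$ element) and $(a_2,\dots,a_k)$ (the tag-$1$ element).

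Now let $A\subseteq M$ with $|A|\geq 2$, and pick distinct $a,c\in A$. I would exhibit two pairs of members of $f[A^k]$ whose intersections differ.
\begin{itemize}
\item Take $x=(a,a,\dots,a)$ and $y=(a,c,\dots,c)$. These are distinct tuples in $A^k$ because $k\geq 2$. Then $f(x)\cap f(y)=\{(a,0)\}$, since their tail tuples differ.
\item Take $x=(a,a,\dots,a)$ and $z=(c,c,\dots,c)$. Then $f(x)\cap f(z)=\emptyset$, since both the first coordinates and the tails differ.
\end{itemize}
Thus distinct members of $f[A^k]$ do not all have the same intersection, so $f[A^k]$ is not a sunflower. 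Applying this to $A=M_0$ for any copy $\M_0$ of $\M$ inside $\M$ finishes the proof.

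There is no real obstacle here. The only points needing care are that $k\geq 2$ is used to make $x\neq y$ while they share a first coordinate, and that the tagging keeps every value of size exactly $2$.
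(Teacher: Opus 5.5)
Your proof is correct, and it takes a more direct route than the paper. The paper argues in two steps. First (Claim 1) it shows that the infinite $2$-sunflower property in dimension $k$ implies a canonical Ramsey property for arbitrary colorings $\chi\colon M^k\to\mathbb{N}$, using the injection $\bar a\mapsto\{\bar a,\chi(\bar a)\}$. Then (Claim 2) it colors $k$-tuples by their quantifier-free type; a monochromatic copy of $\M$ would force all $k$-tuples of $\M$ to share one quantifier-free type, which the tuples $(a,\dots,a)$ and $(a,b,\dots,b)$ refute. You instead write down a single explicit injection that splits a tuple into its first coordinate and its tail, and show it already fails to be a sunflower on $A^k$ for every $A$ with $|A|\geq 2$. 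You use the same witness pair $(a,\dots,a)$, $(a,c,\dots,c)$ as the paper, but you exploit it through intersection patterns (petal shared versus nothing shared with $(c,\dots,c)$) rather than through a monochromatic/heterochromatic dichotomy. Your version has several advantages. It is shorter and self-contained, and your tags guarantee that the values are genuine $2$-sets in a disjoint ground set; the paper instead needs $\mathbb{N}$ to be disjoint from $M^k$. It also makes transparent that the obstruction is purely combinatorial: neither the structure on $M$ nor the requirement $\M_0\cong\M$ matters beyond $|M_0|\geq 2$. The paper's detour has its own payoff. The intermediate property $(\star)$ ties the higher-dimensional sunflower property to the canonical point-Ramsey condition of Theorem 1.3. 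It is also reused essentially verbatim for the $k$-subset variant, where there is no canonical first coordinate and the paper simply swaps in the ``smallest index'' coloring. Your construction could be adapted there too, by using a well-order to split off the minimum element, but the coloring framework handles both cases uniformly.
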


\begin{proof}
Let $k\geq 2$ be arbitrary, we divide the proof into several steps for convenience. 

\textbf{Claim 1:} If $\M$ has the infinite $2$-sunflower property in dimension $k$, then it has the following property: for any $\chi:M^k \rightarrow \mathbb{N}$ as a coloring, there is $\M_0\subseteq \M$ such that $\M_0\cong\M$ and $\chi|_{M_0^k}$ is heterochromatic or monochromatic ($\star$).

Suppose $\M$ has the infinite $2$-sunflower property in dimension $k$, let $\chi: M^k\rightarrow \mathbb{N}$ be a coloring. Without loss of generality we assume that $\mathbb{N}\cap M=\emptyset$, and we define $f: M^k\rightarrow \mathcal{P}_2(S)$ as follows: $f(a_1,...,a_k)=\{(a_1,...,a_k), \chi(a_1,...,a_n)\}$. Clearly $f$ is an injection, and thus by assumption, there exists $\M_0\subseteq \M$ such that $\M_0\cong \M$ and $f|_{M_0^k}$ is a sunflower. If the common intersection is the empty set, then $\chi|_{M_0^k}$ is heterochromatic; if the common intersection is nonempty, then it must consist of a single color $i\in\mathbb{N}$, and thus $\chi|_{M_0^k}$ is monochromatic. Hence $\M$ satisfies $(\star)$.

\textbf{Claim 2:} If $\M$ is a structure satisfying $(\star)$, then all $k$-tuples of $\M$ have the same quantifier-free type.

Suppose $\M$ satisfies $(\star)$ while there exist $(a_1,...,a_k), (b_1,...,b_k)\in M^k$ such that $\mathrm{qftp}\footnote{``qftp" is short for the quantifier-free type. It is the set of all quantifier-free formulas that are true of the tuple.}(a_1,...,a_k)=p_1\not=p_2=\mathrm{qftp}(b_1,...,b_k)$, we define $\chi:M^k \rightarrow\mathbb{N}$ as follows: 
$\chi(\Bar{m})=\begin{cases}0 & \text{ if } \mathrm{qftp}(\Bar{m})=p_1 \cr 1 & \text{ if } \mathrm{qftp}(\Bar{m})=p_2 \cr 2 & \text{ otherwise }  \end{cases}$

Then by assumption, there exists $\M_0\subseteq \M$ such that $\M_0\cong \M$ and $\chi|{M_0^k}$ is heterochromatic or monochromatic. As $\M$ is infinite, it can't be heterochromatic and thus $\chi|{M_0^k}$ is monochromatic, say with color 0. Then for any $\Bar{m}\in M_0^k$, $\mathrm{qftp}(\Bar{m})=p_1$. However, $\M_0\cong \M$ and $(b_1,...,b_k)\in M^k$ with $\mathrm{qftp}(b_1,...,b_k)=p_2\not=p_1$, we get a contradiction. Similarly for being monochromatic with color 1 or 2. This proves the claim.

Now combining the above two claims together, we get that if $\M$ has the infinite $2$-sunflower property in dimension $k$, then all $k$-tuples of $\M$ have the same quantifier-free type. But any structure $\M$ whose size is bigger than 1, say $a\not=b\in M$ we have $(a,...,a)$ and $(a,b,...,b)$ as two $k$-tuples having  different quantifier-free types (note $k\geq 2$). Hence, no infinite structure has the infinite $2$-sunflower property in dimension $k$.
  
\end{proof}

Now combining the above two propositions together, we get the following immediately.

\begin{proposition}
   No infinite structure has the infinite $n$-sunflower property in dimension $k$ for $n,k\geq 2$.
\end{proposition}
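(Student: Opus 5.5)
The plan is to combine the two preceding propositions through an induction on $n$ (equivalently, contraposition), with $k\geq 2$ fixed throughout. Suppose, for contradiction, that some infinite structure $\M$ has the infinite $n$-sunflower property in dimension $k$ for some $n\geq 2$. The first proposition of this section says the property passes downward from $n+1$ to $n$, with the dimension $k$ unchanged. Applying it $n-2$ times gives a chain
\[
n \to n-1 \to \cdots \to 2,
\]
so $\M$ would have the infinite $2$-sunflower property in dimension $k$. This is impossible by the second proposition, since $k\geq 2$ and $\M$ is infinite.

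To present it cleanly, I would state the downward step as a formal induction. For fixed $k\geq 2$, let $P(n)$ be the assertion that no infinite structure has the infinite $n$-sunflower property in dimension $k$.
\begin{enumerate}
\item[] \emph{Base case.} $P(2)$ is exactly the second proposition.
\item[] \emph{Inductive step.} If $P(n)$ holds and some infinite $\M$ had the $(n+1)$-version, then the first proposition would give it the $n$-version, contradicting $P(n)$. Hence $P(n+1)$ holds.
\end{enumerate}
Since $k\geq 2$ was arbitrary, the statement follows for all $n,k\geq 2$.

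There is essentially no obstacle; the only point to check is that the hypotheses line up. The first proposition holds for every structure and all $n,k\in\mathbb{N}^+$, and dimension is preserved, so the downward chain stays at dimension $k\geq 2$. The base case needs exactly $k\geq 2$, and it is the only place where infiniteness of $\M$ is used (to rule out a heterochromatic colouring of a copy with finitely many colours).
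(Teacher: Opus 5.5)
Your proof is correct and matches the paper's, which derives this proposition immediately by combining the downward step from $n+1$ to $n$ at fixed dimension $k$ with the result that no infinite structure has the infinite $2$-sunflower property in dimension $k\geq 2$. Your explicit induction on $n$ simply writes out that combination.
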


Since there are many examples of infinite structures with the infinite $n$-sunflower property (e.g., infinite set with no extra structure), the conjecture mentioned above is false.

One may wonder if the problem is simply due to the fact that we are thinking about $k$-tuples instead of $k$-sets when moving towards higher-dimensional settings. Thus one may propose the following as an alternative higher-dimensional infinite sunflower property. We add $^*$ to distinguish it from the one in Def 2.1. 

\begin{definition}
    Let $\M$ be an infinite structure $n,k\in\mathbb{N}^+$ and $(M)^k$ denote the set of $k$-subsets of $M$. We say that $\M$ has the infinite $n$-sunflower$^*$ property in dimension $k$ if whenever $f: (M)^k\rightarrow \mathcal{P}_n(S)$ is an injection where $S$ is a set, there is a substructure $\M_0 \subseteq \M$ such that $\M_0\cong \M$ and $f[(M_0)^k]$ is a sunflower.
\end{definition}

Unfortunately, the alternative definition does not work.

\begin{proposition}
    No infinite structure has the infinite $n$-sunflower$^*$ property in dimension $k$ for any $n,k\geq 2$.
\end{proposition}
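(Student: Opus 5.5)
The plan is to mirror the proof for $k$-tuples. The difference is that the contradiction cannot come from quantifier-free types, since for a pure set all $k$-subsets look alike. Instead I will use an explicit ``canonical'' coloring of $k$-sets that is neither monochromatic nor heterochromatic on any infinite set.

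\textbf{Step 1 (reduce to $n=2$).} The padding argument from the proof that the infinite $(n+1)$-sunflower property in dimension $k$ implies the $n$-sunflower property goes through verbatim with $(M)^k$ in place of $M^k$. Given $f$, add a fresh point $t$ to every value $f(X)$; a sunflower for the padded map yields a sunflower for $f$ after removing $t$ from the root. Hence the infinite $n$-sunflower$^*$ property in dimension $k$ implies the $2$-sunflower$^*$ property in dimension $k$. It therefore suffices to show that no infinite $\M$ has the infinite $2$-sunflower$^*$ property in dimension $k$ for $k\geq 2$.

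\textbf{Step 2 (canonical coloring, as in Claim 1).} Suppose $\M$ has the $2$-sunflower$^*$ property in dimension $k$. Let $\chi:(M)^k\to C$ be any coloring into an arbitrary set $C$; note that $M$ need not be countable, so I do not restrict to $\mathbb{N}$. Take a copy of $C$ disjoint from $(M)^k$ and define $f(X)=\{X,\chi(X)\}$. This $f$ is injective. Exactly as in Claim 1, a sunflower $f[(M_0)^k]$ with $\M_0\cong\M$ gives one of two outcomes. If the root is empty, $\chi$ is heterochromatic on $(M_0)^k$. If the root is nonempty, the root must be a single common color, so $\chi$ is monochromatic on $(M_0)^k$.

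\textbf{Step 3 (the bad coloring).} Fix a well-order $<$ of $M$ and let $\chi(X)=\min_< X$, with colors in $C=M$. Now let $M_0\subseteq M$ be any infinite set and pick elements $a_0<a_1<\dots<a_{2k}$ of $M_0$.
\begin{itemize}
\item $\chi$ is not heterochromatic on $(M_0)^k$: the two distinct $k$-sets $\{a_0,a_1,\dots,a_{k-1}\}$ and $\{a_0,a_2,\dots,a_k\}$ both have color $a_0$. This uses $k\geq 2$.
\item $\chi$ is not monochromatic on $(M_0)^k$: the sets $\{a_0,\dots,a_{k-1}\}$ and $\{a_k,\dots,a_{2k-1}\}$ have the different colors $a_0$ and $a_k$.
\end{itemize}
Since any $\M_0\cong\M$ is infinite, this contradicts Step 2, which finishes the proof.

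I expect no serious obstacle. The only point needing care is Step 2: the color set must be allowed to be arbitrary and made disjoint from $(M)^k$, so that the argument of Claim 1 applies even when $M$ is uncountable. This is exactly why I use the well-order coloring rather than a type-based one.
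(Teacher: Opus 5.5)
Your proof is correct and follows the paper's argument step for step. You use the same padding reduction to $n=2$, the same encoding $f(X)=\{X,\chi(X)\}$ with a color set of size $|M|$, and the same ``least element under a well-ordering (enumeration) of $M$'' coloring; your explicit witnesses $\{a_0,\dots,a_{k-1}\}$, $\{a_0,a_2,\dots,a_k\}$ and $\{a_k,\dots,a_{2k-1}\}$ just spell out the verification that the paper leaves as ``easy to see.''
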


\begin{proof}
    First, the same proof as that in Proposition 2.2 gives us that if $\M$ has the infinite $n+1$-sunflower$^*$ property in dimension $k$, then $\M$ has the infinite $n$-sunflower property$^*$ in dimension $k$. Thus we can focus on the infinite $2$-sunflower property in dimension $k$ for $k\geq 2$. 

 Then we have the following claim, the proof of which is the same as that of the above Claim 1. 

 \textbf{Claim:} If $\M$ has the infinite $2$-sunflower$^*$ property in dimension $k$, then it has the following property: for any $\chi:(M)^k \rightarrow |M|$ as a coloring, there is $\M_0\subseteq \M$ such that $\M_0\cong\M$ and $\chi|_{(M_0)^k}$ is heterochromatic or monochromatic ($\triangle$).

Now let $n,g\geq 2$ be arbitrary, suppose an infinite structure $\M$ has the infinite $n$-sunflower$^*$ property in dimension $k$, then $\M$ has the infinite $2$-sunflower$^*$ property in dimension $k$. Therefore, by the claim, $\M$ satisfies $\triangle$. Now we define $\chi: (M)^k\rightarrow |M|$ as follows: let $(a_i)_{i<\kappa}$ be an enumeration of $M$ without repetition (thus $|M|=\kappa$), for any $\{a_{i_1},...,a_{i_k}\}\in (M)^k$, $\chi(\{a_{i_1},...,a_{i_k}\})= i_j$ where $i_j$ is the smallest index among $\{i_1,...,i_k\}$. It is easy to see that $\chi$ has no monochromatic or heterochromatic copy of $\M$ as $\M$ is infinite. Thus we get a contradiction, and this proves that there is no infinite structure with the infinite $n$-sunflower$^*$ property in dimension $k$ for any $n,k\geq 2$.
    
\end{proof}

\section{Homogeneous structures with the galah property}

As the galah property implies indivisibility, the authors in \cite{2} asked which indivisible Fra\"iss\'e structures considered in the literature possess the galah property. In this section, we answer this for those mentioned in the second question of \cite{2}. We recall very basic facts about \textit{Fra\"iss\'e classes}, which one can find in basic books in model theory (e.g., \cite{k}) and almost every paper about indivisible structures (e.g., \cite{s,ms,cs}). In the rest of this paper, all classes of structures are assumed to be closed under isomorphisms.

\begin{definition}
    A class of structures $\K$ is a \textit{Fra\"iss\'e class} if it has the following properties:

    \begin{enumerate}
        \item[1)] $\K$ is closed under taking substructures (\textit{Hereditary Property}).
        \item[2)] For any $A,B\in\K$, there is $C\in K$ such that both $A$ and $B$ embed into $C$  (\textit{Joint Embedding Property}).
        \item [3)] For any embeddings $f:A\rightarrow B$ and $g:A\rightarrow C$ where $A,B,C\in \K$, there is $D\in \K$ with embeddings $f':B\rightarrow D$ and $g':C\rightarrow D$ such that $f'\circ f=g'\circ g$ (\textit{Amalgamation Property}).
    \end{enumerate}    
\end{definition}

It is easy to see that if there is an initial element in $\K$ (e.g., directed and undirected simple graphs), then the third property implies the second one. For the amalgamation property, if one can have $D\in \K$ with embeddings $f':B\rightarrow D$ and $g':C\rightarrow D$ satisfy the extra condition that $ f'(B)\cap g'(C) =f'\circ f(A) (=g'\circ g(A))$, we say that $\K$ has \textit{strong amalgamation}\footnote{Some people call it \textit{disjoint amalgamation}.}. For a relational language $L$, we say $\K$ has \textit{free amalgamation} if one can have $D$ with embeddings $f':B\rightarrow D$ and $g':C\rightarrow D$ satisfy the extra condition for strong amalgamation and $R^D= R^{f'(B)}\cup R^{g'(C)}$ for any $R\in L$ where we view $f'(B)$ and $g'(C)$ as substructures of $D$.

For the following properties, people also call them ``$n$-ultrahomogeneous" and ``ultrahomogeneous" in the literature.

\begin{definition}
    A structure $\mathcal{M}$ is \textit{n-homogeneous} if every isomorphism between $n$-generated substructures of $\mathcal{M}$ extends to an automorphism of $\mathcal{M}$. $\mathcal{M}$ is \textit{homogeneous} if it is $n$-homogeneous for each $n\in \mathbb{N}$.
\end{definition}

For any structure $\M$, let $Age(\M)$ (called the \textit{age} of $\M$) denote the class of all finite structures that are embeddable into $\M$. The well-known Fr\"aiss\'e theorem states that each Fra\"iss\'e class $\K$ has a unique generic structure whose age is $\K$, and its uniqueness is characterized by its age and homogeneity:

\begin{theorem*}[Fr\"aiss\'e Theorem] Let $L$ be a countable relational\footnote{Actually ``relational" can be replaced by something more general, like locally finiteness. In this paper, we do not need the theorem in its most general form.} language and $\K$ be a Fra\"iss\'e class of finite $L$-structures, then there is a countable homogeneous structure $\M$ (called the \textit{Fra\"iss\'e limit}) such that $Age(\M)=\K$. And it is the unique countable homogeneous structure whose age is $\K$.      
\end{theorem*}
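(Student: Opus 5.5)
The plan is the classical back-and-forth argument for existence and uniqueness. Fix a countable relational language $L$ and a Fra\"iss\'e class $\K$ of finite $L$-structures; as usual we assume $\K$ contains only countably many isomorphism types, which is automatic once the finite structures in $\K$ are considered up to isomorphism over a countable relational language. The pivot is the \emph{extension property} (EP) for a countable structure $\M$: whenever $A\subseteq B$ are in $\K$ and $f:A\rightarrow\M$ is an embedding, $f$ extends to an embedding $g:B\rightarrow \M$. I would prove three things in order. First, a countable structure with $Age(\M)=\K$ is homogeneous if and only if it has EP. Second, an EP structure with age $\K$ exists. Third, any two countable EP structures with the same age are isomorphic.

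\textbf{Existence.} Build a chain $D_0\subseteq D_1\subseteq\cdots$ of structures in $\K$ and set $\M=\bigcup_i D_i$. Take countably many tasks, namely triples $(A,B,f)$ with $A\subseteq B$ representatives of the countably many pairs in $\K$ and $f:A\rightarrow D_i$ an embedding. Each $D_i$ is finite, so there are only countably many tasks in total. Interleave them with a dovetailing enumeration, and handle one task at each stage: apply the Amalgamation Property to $f:A\rightarrow D_j$ and the inclusion $A\rightarrow B$, and take $D_{j+1}$ to be the amalgam, identified so that $D_j\subseteq D_{j+1}$. Also use the Joint Embedding Property at alternate stages to absorb each member of $\K$. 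Then $Age(\M)\supseteq\K$ by JEP. Conversely $Age(\M)\subseteq\K$: a finite substructure of $\M$ lies in some $D_i$, and $\K$ is hereditary. EP holds because every relevant $f$ has finite image, so it lands in some $D_i$ and was handled at a later stage.

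\textbf{Uniqueness and homogeneity.} Given countable EP structures $\M,\mathcal{N}$ with the same age $\K$, enumerate both. Build an increasing chain of finite partial isomorphisms $p_0\subseteq p_1\subseteq\cdots$, alternately putting the next element of $\M$ into the domain and the next element of $\mathcal{N}$ into the range. For the forth step, let $A=dom(p_i)$ and $B=A\cup\{m\}$, both in $\K$. Apply EP in $\mathcal{N}$ to the embedding $p_i:A\rightarrow \mathcal{N}$ to extend it to $B$. The back step is symmetric, using $p_i^{-1}$ and EP in $\M$. The union of the chain is an isomorphism. Running the same argument with $\mathcal{N}=\M$, starting from a given isomorphism between finite substructures, yields an automorphism extending it; this gives homogeneity of $\M$. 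Finitely generated substructures are finite because the language is relational. Conversely, a homogeneous structure with age $\K$ has EP. Given $f:A\rightarrow\M$ and $A\subseteq B$, embed $B$ into $\M$ as $B'$, since $B\in Age(\M)$. The map from the copy of $A$ inside $B'$ to $f(A)$ is an isomorphism of finite substructures, so it extends to an automorphism $\sigma$. Composing $\sigma$ with the embedding of $B$ gives the required extension of $f$. Hence any countable homogeneous structure with age $\K$ is isomorphic to the one constructed.

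\textbf{Main obstacle.} The only delicate step is the bookkeeping in the existence construction. New embeddings $f$ appear as the chain grows, so the tasks cannot be listed in advance. I would handle this with a pairing function $\N\times\N\rightarrow\N$: at stage $i$, enumerate the (countably many) tasks with target $D_i$, and schedule the $k$-th of them at a stage $\langle i,k\rangle\geq i$. This ensures every task is eventually handled.
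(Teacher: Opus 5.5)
The paper does not prove this theorem; it only recalls it as a standard fact from model theory texts. Your argument is the standard textbook proof: build a structure with the extension property as a union of a chain using AP and JEP, show that EP is equivalent to homogeneity for countable structures of age $\K$, and get uniqueness by back-and-forth. Those steps are carried out correctly. There is one genuine error, concerning a hypothesis.

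\textbf{The countability hypothesis is not automatic.} You claim that having only countably many isomorphism types in $\K$ is automatic for a countable relational language. That is true only when $L$ is finite. If $L$ has infinitely many relation symbols, the one-element $L$-structures already realize $2^{\aleph_0}$ isomorphism types, since each symbol either holds on the constant tuple or not.

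For a concrete counterexample, take $L=\{P_n\mid n\in\N\}$ with each $P_n$ unary. The class of all finite $L$-structures is hereditary and has JEP and (free) AP. Yet no countable structure has it as its age, because a countable structure has only countably many finite substructures.

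So the theorem as literally stated in the paper is false: the definition of Fra\"iss\'e class it recalls omits this condition. You therefore cannot derive countability; it must be imposed as an extra hypothesis on $\K$, as in the standard formulations of Fra\"iss\'e's theorem. Your proof genuinely uses it. The list of representative pairs $A\subseteq B$ for the AP tasks, and the list of members of $\K$ absorbed via JEP, must both be countable for the dovetailing to work.

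With that hypothesis stated explicitly as an assumption rather than claimed as automatic, your proof is complete.

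A small point of convention: if empty structures are excluded from $\K$, the first step of your back-and-forth from $p_0=\emptyset$ should appeal to $Age(\mathcal{N})=\K$ rather than to EP.
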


In fact, the Fra\"iss\'e limit $\M$ is also characterized by its age together with the extension property with respect to its age: for any embedding $f:A\rightarrow \M$ and $A\subseteq B$ where $A,B\in Age(\M)$, $f$ extends to an embedding of $B$ into $\M$. When the language is relational, one can even assume that $B=A\cup \{b\}$ where $b\not\in A$.

Now we introduce some definitions related to indivisibility and the galah property.

\begin{definition}
    For any structure $\M$, for any finite $A\subseteq \M$ and $a\in M\setminus A$, let $Age (a/A)$ denote the set of all finite structures embeddable into $\{m\in M\mid \mathrm{qftp}(m/A) =\mathrm{qftp}(a/A)\}$ (viewed as a substructure of $\M$). The \textit{rank} of $\M$ is $\{Age(a/A)\mid A\subseteq M \text{ is finite and } a\in M\setminus A\}$, and we say $\M$ is \textit{locally replicable} if $\{m\in M\mid \mathrm{qftp}(m/A) =\mathrm{qftp}(a/A)\}$ contains a copy of $\M$ for any finite $A\subseteq \M$ and $a\in M\setminus A$.
    
\end{definition}

The definition of \textit{rank}\footnote{Sauer called it ``\textit{the rank of types}". We drop ``types" here as it may cause confusion. For model-theorists, what Sauer called ``types" are actually special quantifier-free types.} was introduced by Sauer to give a characterization of indivisibility for countable oligomorphic homogeneous relational structures with free amalgamation: for those structures, indivisibility is equivalent to linearity of rank (under $\subseteq$) \cite[Thm 2.1]{s}. \textit{Locally replicability} was first introduced in \cite[Def. 13]{1} with the name ``\textit{universal duplication of quantifier-free types}". Here we follow the terminology in \cite[Def. 2.9]{2}.

\begin{proposition}\label{rank}
    A countable homogeneous structure $\M$ in binary relational language with strong amalgamation has the galah property if and only if the rank of $\M$ is singleton.
\end{proposition}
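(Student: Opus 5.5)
The plan is to go through the equivalence of the galah property with local replicability from the theorem quoted in the introduction (\cite[Prop 2.15 and Thm A]{2}). That theorem is stated for relational Fra\"iss\'e limits with strong amalgamation. A countable homogeneous relational structure is the Fra\"iss\'e limit of its age, so it applies here. It therefore suffices to show that, for a countable homogeneous binary structure $\M$ with strong amalgamation, $\M$ is locally replicable if and only if the rank of $\M$ is a singleton. Throughout I write $X_{a,A}=\{m\in M\mid \mathrm{qftp}(m/A)=\mathrm{qftp}(a/A)\}$. Taking $A=\emptyset$ (which the definition allows, since $\emptyset$ is finite) gives $X_{a,\emptyset}=\{m\mid \mathrm{qftp}(m)=\mathrm{qftp}(a)\}$. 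Hence $Age(\M)$ itself need not lie in the rank, but every element of the rank is contained in $Age(\M)$.

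For local replicability $\Rightarrow$ singleton rank: if every $X_{a,A}$ contains a copy of $\M$, then $Age(\M)\subseteq Age(a/A)\subseteq Age(\M)$. So every member of the rank equals $Age(\M)$, and the rank is the singleton $\{Age(\M)\}$.

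For the converse, suppose the rank is $\{\mathcal{C}\}$. The first step is to show $\mathcal{C}=Age(\M)$. Fix $B\in Age(\M)$ embedded as $B\subseteq M$, and choose any $a\in M$. Let $B'$ be a copy of $B$ whose points all realize $\mathrm{qftp}(a)$ over $\emptyset$; more precisely, I would apply the extension property one point at a time to realize the needed configuration.

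The key fact I expect to use is that strong amalgamation together with binarity lets us build the following. Given finite $A$ and a one-point type $p=\mathrm{qftp}(a/A)$, and given a finite $D\in\mathcal{C}$, the amalgam of copies of $A\cup\{a\}$ over $A$ can be glued to $D$. Relations among the new points are dictated only by $D$, and relations to $A$ only by $p$; this is consistent because relations are binary, so there are no higher-arity constraints. Using this I would show $\mathcal{C}$ is closed in the appropriate way, and in particular that $\mathcal{C}\supseteq Age(\M)$ by comparing with a suitable $A$. The step I expect to be the main obstacle is exactly getting $\mathcal{C}=Age(\M)$ rather than merely a uniform subclass. One must pick $A$ and $a$ so that $X_{a,A}$ is visibly large, or argue that $Age(a/\emptyset)$ for a single point equals the age of its quantifier-free-type class. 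If the unary types are nontrivial, I would handle them via qftp over $\emptyset$ and adjust.

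The second step is to upgrade ``$Age(a/A)=Age(\M)$ for all $A,a$'' to ``$X_{a,A}$ contains a copy of $\M$''. Enumerate $\M=\{m_0,m_1,\dots\}$ and build an embedding $h$ into $X_{a,A}$ by induction. At stage $n$, we must find $x\in X_{a,A}$ realizing $\mathrm{qftp}(m_n/h(m_0),\dots,h(m_{n-1}))$ transported by $h$. Setting $A'=A\cup\{h(m_i):i<n\}$, I would use the extension property and homogeneity to obtain some $a'$ with $\mathrm{qftp}(a'/A)=\mathrm{qftp}(a/A)$ and the required type over $\{h(m_i)\}$. Such an $a'$ exists because binary relations make the two requirements independent, and strong amalgamation ensures the type is realizable by a new point. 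Then $a'\in X_{a,A}$. This is the usual back-and-forth argument, so the galah property follows from the quoted theorem.
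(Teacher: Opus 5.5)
Your reduction through Theorem 1.3 and your proof that local replicability implies a singleton rank match the paper. The converse, however, has two gaps.

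\textbf{First gap: $\mathcal{C}=Age(\M)$.} You never prove this. You flag it as the main obstacle and sketch a gluing argument, but gluing copies of some $D\in\mathcal{C}$ onto $A$ only restates what $\mathcal{C}=Age(a/A)$ already says, so it cannot enlarge $\mathcal{C}$. The missing observation is one line:
\begin{itemize}
\item The rank contains $Age(a/\emptyset)$ for every $a\in M$.
\item The one-point structure $\{a\}$ lies in $Age(a/\emptyset)$ but not in $Age(b/\emptyset)$ whenever $\mathrm{qftp}(b)\neq\mathrm{qftp}(a)$, because embeddings preserve quantifier-free types.
\item So a singleton rank forces all elements to share one quantifier-free type over $\emptyset$. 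Then $X_{a,\emptyset}=M$, and $\mathcal{C}=Age(a/\emptyset)=Age(\M)$.
\end{itemize}
Your remark that $Age(\M)$ ``need not lie in the rank'' is exactly what the hypothesis excludes. The case of nontrivial unary types that you planned to ``adjust'' for cannot occur.

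\textbf{Second gap: the inductive step of your embedding into $X_{a,A}$.} The justification you give (``binary relations make the two requirements independent, strong amalgamation realizes the type'') never uses $Age(a/A)=Age(\M)$. Without that hypothesis the step is false. Take the triangle-free Henson graph, which is binary, homogeneous and has free amalgamation. Let $A=\{c\}$ with $a$ adjacent to $c$. Then $X_{a,A}$ is the neighbourhood of $c$, which is an independent set. If $m_1$ is adjacent to $m_0$, the required type (adjacent to $c$ and to $h(m_0)$, where $h(m_0)$ is adjacent to $c$) would create a triangle. Binarity only makes the combined type well defined; whether it is realized is the whole content, and strong amalgamation holds in this example yet does not help.

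The paper's argument supplies the missing move:
\begin{itemize}
\item Since $\{m_0,\dots,m_n\}\in Age(\M)=Age(a/A)$, there is an embedding $g$ of it into $X_{a,A}$.
\item Every point of $h[\{m_i\}_{i<n}]$ and of $g[\{m_i\}_{i<n}]$ realizes $\mathrm{qftp}(a/A)$, and the language is binary. Hence the map fixing $A$ pointwise and sending $h(m_i)\mapsto g(m_i)$ is an isomorphism of finite substructures.
\item By homogeneity it extends to an automorphism $\sigma$. Since $\sigma$ fixes $A$ pointwise, it maps $X_{a,A}$ onto itself, so $\sigma^{-1}(g(m_n))\in X_{a,A}$ is the required point.
\end{itemize}
Equivalently, the existence of $g$ shows that the one-point extension of $A\cup h[\{m_i\}_{i<n}]$ you want lies in $Age(\M)$, and then the extension property applies. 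Strong amalgamation plays no role in this step; it is needed only to invoke Theorem 1.3.
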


\begin{proof}
The left-to-right direction is obvious by Theorem 1.3. For the right-to-left direction, as the rank is singleton, for any $a\not=b\in M$, $\mathrm{qftp}(a/\emptyset)=\mathrm{qftp}(b/\emptyset)$. Thus for any $a\in M$, $Age(a/\emptyset)= Age(\M)$. The rank of $\M$ is just $\{Age(\M)\}$. Now the proof is the same as that of \cite[Lem 2.28]{2}. We give the details here for completeness.

For any finite $A\subseteq \M$ and $a\in M\setminus A$, let $\Gamma(a/A)=\{m\in M\mid \mathrm{qftp}(m/A) =\mathrm{qftp}(a/A)\}$. For any $A'\in Age(\M)$ such that $f: A'\rightarrow \Gamma(a/A) $ is an embedding and let $B=A'\cup \{b\}\in Age(\M)$. As $Age(a/A)=Age(\M)$, there is an embedding $g: B\rightarrow \Gamma(a/A)$. As the language is binary relational, there is an isomorphism from $f(A')\cup A$ onto $g(A')\cup A$ which maps $f(c)$ to $g(c)$ for any $c\in A'$ and fixes $A$ pointwise. By homogeneity of $\M$, the isomorphism extends to an automorphism of $\M$, denoted by $h$. Clearly $h(\Gamma(a/A))=\Gamma(a/A)$ as $h$ fixes $A$ pointwise. In particular, $h^{-1}(g(b))\in \Gamma(a/A)$. Then the map $f\cup\{(b,h^{-1}(g(b)))\}$ is an embedding of $B$ into $\Gamma(a/A)$ extending $f$. Therefore, $\Gamma(a/A)\cong \M$. This proves that $\M$ is locally replicable, and by Theorem 1.3, $\M$ has the galah property.

\end{proof}

By \cite[Thm 2.1]{s}, we know that for any countable homogeneous structure in finite binary relational language with free amalgamation, if its rank is linear but not singleton, then it is an indivisible structure without the galah property. For example, \cite[Exam 11.6]{s}.

Besides, the corresponding statement for $\omega$-categorical relational structures is also true. Using Theorem 1.3, the following is just \cite[Lem 2.27]{2} translated to Sauer's terminology.

\begin{proposition}
    Let $\M$ be a $\omega$-categorical\footnote{$\omega$-categoricity means that there is a unique countably infinite structure satisfying its first-order theory up-to-isomorphism. By the well-known Ryll-Nardzewski theorem, it is equivalent to the automorphism group of the structure being oligomorphic. For example, see \cite[Thm 4.3.1]{k}.} countable homogeneous structure in relational language with strong amalgamation. $\M$ has the galah property if and only if the rank of $\M$ is singleton.
\end{proposition}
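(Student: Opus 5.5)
Via Theorem 1.3, the galah property for such $\M$ is equivalent to local replicability. The plan is to prove that local replicability is equivalent to the rank being a singleton, under $\omega$-categoricity and strong amalgamation.

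\textbf{Left to right.} Assume $\M$ has the galah property, so $\M$ is locally replicable. For any finite $A\subseteq M$ and $a\in M\setminus A$, the set $\Gamma(a/A)=\{m\in M\mid \mathrm{qftp}(m/A)=\mathrm{qftp}(a/A)\}$ contains a copy of $\M$. Hence $Age(a/A)\supseteq Age(\M)$, and trivially $Age(a/A)\subseteq Age(\M)$. So every element of the rank equals $Age(\M)$, and the rank is a singleton.

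\textbf{Right to left.} Suppose the rank is a singleton. Taking $A=\emptyset$ shows that all elements of $\M$ have the same $1$-type and that $Age(a/\emptyset)=Age(\M)$. So the unique element of the rank is $Age(\M)$, and $Age(a/A)=Age(\M)$ for all finite $A$ and $a\notin A$. It remains to show that each $\Gamma(a/A)$ contains a copy of $\M$. In the binary case (Proposition \ref{rank}) this came from the extension property, using that $f(A')\cup A\cong g(A')\cup A$ over $A$. That step fails for higher arities, because relations can involve several points of $A'$ together with points of $A$.

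\textbf{Main step.} I would replace the one-point extension argument by a compactness/König argument, which is where $\omega$-categoricity enters. Enumerate $M=\{m_0,m_1,\dots\}$. I want a sequence of embeddings $e_n:\{m_0,\dots,m_{n}\}\to\Gamma(a/A)$, coherent up to automorphisms fixing $A$.

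Consider the tree whose level-$n$ nodes are the orbits, under $\mathrm{Aut}(\M/A)$, of embeddings $\{m_0,\dots,m_n\}\to\Gamma(a/A)$, ordered by restriction. By Ryll-Nardzewski, $\mathrm{Aut}(\M/A)$ has finitely many orbits on $n$-tuples, so the tree is finitely branching. Every level is nonempty because $Age(a/A)=Age(\M)$. König's lemma then gives an infinite branch.

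By homogeneity, the orbit of a finite tuple over $A$ is determined by its quantifier-free type over $A$. So along the branch I can inductively choose representatives extending one another: given $e_n$, any embedding $e'$ in the next node restricts to something in the orbit of $e_n$. Applying an automorphism fixing $A$, which preserves $\Gamma(a/A)$, produces an extension of $e_n$. The union of these representatives is an embedding of $\M$ into $\Gamma(a/A)$. This gives local replicability, hence the galah property by Theorem 1.3.

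\textbf{Expected obstacle.} The delicate point is ensuring that the node sets really are finite and nonempty at each level. Finiteness requires that $\mathrm{Aut}(\M/A)$ is still oligomorphic, which holds since adding finitely many constants preserves $\omega$-categoricity. Strong amalgamation is then used only through Theorem 1.3. This is essentially \cite[Lem 2.27]{2}, so I would follow that argument.
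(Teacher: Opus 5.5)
Your proposal is correct and follows the same route as the paper. The paper obtains the statement by combining Theorem 1.3 with \cite[Lem 2.27]{2}: for $\omega$-categorical $\M$, local replicability is equivalent to $Age(a/A)=Age(\M)$ for all finite $A$ and $a\notin A$, which is exactly the rank being a singleton. The paper only cites that lemma, whereas you reprove it yourself with the standard K\"onig's lemma argument on $\mathrm{Aut}(\M/A)$-orbits of partial embeddings into $\Gamma(a/A)$, and that argument is sound.
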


By Proposition 3.5, we know that every $k$-uniform hypergraphs in \cite[Thm 2.5]{s} is an indivisible structure without the galah property as its rank is of size $n-k+1$ where $n>k\geq 2$. Besides, since the galah property implies indivisibility, Proposition 3.5 is a weaker version of \cite[Thm 2.6]{s} as we still need strong amalgamation.

\subsection{Homogeneous directed graphs}

The following definition of \textit{decompositions} was first introduced in \cite{ms} for tournaments.

\begin{definition}
    Let $G$ be a directed (simple) graph. A \textit{decomposition} of $G$ is a partition of the vertex set into four sets $A_1, B, A_2, C$ with $B\not=\emptyset$ such that there are directed edges from $x$ to $y$ and from $y$ to $z$ for any $x\in A_1$, $y\in B$ and $z\in A_2$, and there is no directed edge with one element in $B$ and one element in $C$. We write $G=(A_1,B,A_2,C)$ to indicate that partition.
\end{definition}

Note that when $G$ is a tournament, $C$ must be empty in a decomposition of $G$. We call a decomposition \textit{nontrivial} if $2\leq |B|$ and $1\leq |A_1\cup A_2\cup C|$. Now let $\mathcal{T}$ be a set of finite directed graphs such that no element of $\mathcal{T}$ embeds into another element of $\mathcal{T}$. Suppose that the class of all finite $\mathcal{T}$-free directed graphs has strong amalgamation with $H_{\mathcal{T}}$ as its Fra\"iss\'e limit (e.g., $\mathcal{T}$ is a set of finite tournaments), then we have the following characterization of the galah property for $H_{\mathcal{T}}$.

\begin{theorem}
$H_\mathcal{T}$ has the galah property if and only if every element of $\mathcal{T}$ with cardinality $\geq 3$ has no nontrivial decomposition.
\end{theorem}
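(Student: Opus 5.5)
We plan to use Proposition \ref{rank}. $H_\mathcal{T}$ is a countable homogeneous structure in a binary relational language with strong amalgamation, so it has the galah property if and only if its rank is a singleton. Since all vertices have the same quantifier-free type over $\emptyset$, the rank is a singleton exactly when $Age(a/A)=Age(H_\mathcal{T})$ for every finite $A$ and every $a\notin A$. We may assume no element of $\mathcal{T}$ has size $\leq 2$ that trivializes the structure; for instance, a forbidden non-edge gives a tournament. Such small elements impose conditions only on pairs and do not affect the fibres. So the theorem reduces to the following: every fibre $\Gamma(a/A)$ has full age if and only if no $T\in\mathcal{T}$ with $|T|\geq 3$ has a nontrivial decomposition.

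For the direction ``no nontrivial decomposition $\Rightarrow$ galah'', we take a finite $A$, a vertex $a\notin A$, and a finite $\mathcal{T}$-free digraph $D$. We must show that $D$ embeds into $\Gamma(a/A)$. Let $A_1$ be the in-neighbours of $a$ in $A$, let $A_2$ be the out-neighbours, and let $C$ be the non-neighbours. Build the digraph $E$ on $A\cup D$ that keeps $A$ and $D$ as induced subgraphs and makes every $d\in D$ relate to $A$ exactly as $a$ does. By the extension property it suffices to check that $E$ is $\mathcal{T}$-free. Suppose $T\in\mathcal{T}$ embeds into $E$ with image $X$, and put $B=X\cap D$. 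If $B=\emptyset$, then $X\subseteq A$, which is impossible. If $|B|=1$, then $X$ is isomorphic to a subset of $A\cup\{a\}$, which is also impossible. If $B=X$, then $X\subseteq D$, again impossible. In the remaining case $|B|\geq 2$ and $X\setminus B\neq\emptyset$, so $(X\cap A_1,B,X\cap A_2,X\cap C)$ is a nontrivial decomposition of $T$. This forces $|T|\geq 3$ and contradicts the hypothesis. Hence $E$ is $\mathcal{T}$-free, and $\Gamma(a/A)$ has full age.

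For the converse, suppose $T\in\mathcal{T}$ with $|T|\geq 3$ has a nontrivial decomposition $(A_1,B,A_2,C)$. Let $A$ be a copy of $A_1\cup A_2\cup C$ inside $H_\mathcal{T}$. This is possible because it is a proper subgraph of $T$ and, by the antichain condition on $\mathcal{T}$, it is $\mathcal{T}$-free. Pick $b_0\in B$ and realize in $H_\mathcal{T}$ a vertex $a$ over $A$ that relates to $A$ as $b_0$ does to $A_1\cup A_2\cup C$. Again $A\cup\{a\}$ is a proper subgraph of $T$, hence $\mathcal{T}$-free, and the extension property applies. Now $B$ is a proper subgraph of $T$, so $B\in Age(H_\mathcal{T})$. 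If $B$ embedded into $\Gamma(a/A)$, then together with $A$ it would induce a copy of $T$, because each vertex of the image relates to $A$ exactly as the vertices of $B$ relate to $A_1,A_2,C$ in the decomposition. That is impossible, so $Age(a/A)\neq Age(H_\mathcal{T})$, and the rank is not a singleton.

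The main obstacle is the bookkeeping in the forward direction. We must verify that the embedded $X$ really yields a decomposition in the sense of the definition: every edge between $X\cap A_1$ and $B$ goes into $B$, every edge between $B$ and $X\cap A_2$ goes out of $B$, and there are no edges between $B$ and $X\cap C$. All of this holds because every vertex of $D$ copies $a$'s relations. We must also handle the degenerate small elements of $\mathcal{T}$ carefully, so that the $|T|\geq 3$ restriction is exactly what is needed.
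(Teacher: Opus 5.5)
Your proof is correct and follows the paper's argument. For the backward direction, you build the same digraph on $A\cup D$ that the paper uses (the paper phrases it by contradiction) and run the same case analysis on $|X\cap D|$ and $X\setminus D$ to extract a nontrivial decomposition. For the forward direction, you use the same obstruction: a copy of $B$ in the fibre over $A_1\cup A_2\cup C$ would rebuild $T$. The only difference there is that you route it through Proposition~\ref{rank}, whereas the paper invokes local replicability directly.
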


\begin{proof}
    First, if $\mathcal{T}$ has an element with cardinality $2$, then that element must be a two-element directed graph with a directed edge. In that case $H_\mathcal{T}$ is a countably infinite independent set which clearly has the galah property. In the following, we assume every element of $\mathcal{T}$ has size $\geq 3$.

For the left-to-right direction, suppose  $H_\mathcal{T}$ has the galah property while there is $A\in \mathcal{T}$ such that $A$ has a nontrivial decomposition $(A_1,B,A_2,C)$ where $|B|\geq 2$ and $|A_1\cup A_2\cup C|\geq 1$. Let $b$ be an element of $B$. As $A\in \mathcal{T}$ is minimal by our assumption about $\mathcal{T}$, $\{b\}\cup A_1\cup A_2\cup C\in Age(H_\mathcal{T})$.  Without loss of generality we can assume $\{b\}\cup A_1\cup A_2\cup C\subseteq H_\mathcal{T}$ as a substructure. Now we consider the set $\Gamma(b/A_1\cup A_2\cup C)=\{m\in H_\mathcal{T}\mid \mathrm{qftp}(m/A_1\cup A_2\cup C)= \mathrm{qftp}(b/A_1\cup A_2\cup C)\}$. As $H_\mathcal{T}$ has the galah property, by Theorem 1.3, $H_\mathcal{T}$ is locally replicable. Thus $\Gamma(b/A_1\cup A_2\cup C)$ contains a copy of $H_\mathcal{T}$. As $|A_1\cup A_2\cup C|\geq 1$,  $B$ is a proper substructure of $A$ and thus $B\in Age(H_\mathcal{T})$. Therefore, $\Gamma(b/A_1\cup A_2\cup C)$ contains a copy of $B$. But then that copy together with $A_1\cup A_2\cup C$ gives us a copy of $A$ inside $H_\mathcal{T}$, contradicting the assumption that $A\in \mathcal{T}$ and $H_\mathcal{T}$ is $\mathcal{T}$-free. This proves the left-to-right direction.

For the right-to-left direction, suppose every element of $\mathcal{T}$ with cardinality $\geq 3$ has no nontrivial decomposition. Let $A\subseteq H_\mathcal{T}$ be finite and $a\in H_\mathcal{T}\setminus A$ be arbitrary. Suppose $Age(a/A)\subsetneq Age(H_\mathcal{T})$, by transitivity\footnote{As we are considering directed simple graphs, every element has the same quantifier-free type over the empty set.}, we know that $A\not=\emptyset$. There exists $B\in Age(H_\mathcal{T})$ such that $B$ is not embeddable into $\Gamma(a/A)$. Define $A_1$, $A_2$ and $C$ as follows: 

\noindent $A_1=\{x\in A\mid \text{there is a directed edge from }x \text{ to } a\}$\\ $A_2=\{y\in A\mid \text{there is a directed edge from }a \text{ to } y\}$\\ $C=\{w\in A\mid \text{there is no directed edge connecting }a \text{ and } w\}$.

Clearly $A=A_1\cup A_2\cup C$. Without loss of generality we may assume $B\cap A=\emptyset$. Now as $B$ is not embeddable into $\Gamma(a/A)$, the directed graph $A\cup B$ with edges $E(A)\cup\{(x,y)\mid x\in A_1, y\in B \}\cup \{(y,z)\mid y\in B, z\in A_2\}$ is not $\mathcal{T}$-free. Hence there is $T\in \mathcal{T}$ such that $A\cup B$ contains a copy of $T$, and we can assume $T\subseteq A\cup B$ without loss of generality. As $A\cup\{a\}\subseteq H_\mathcal{T}$, it does not contain a copy of $T$. Now clearly $|T\cap B|\geq 2$, for otherwise, $T\subseteq A\subseteq H_\mathcal{T}$ if $T\cap B=\emptyset$ or $T\cong A\cup\{a\}$ if $|T\cap B|=1$, and we get a contradiction in either case. Now if $T\cap A=\emptyset$, then $T\subseteq B\in Age(H_\mathcal{T})$, a contradiction as $H_\mathcal{T}$ is $\mathcal{T}$-free and $T\in\mathcal{T}$. Therefore, $|T\cap A|=|T\cap (A_1\cup A_2\cup C))|\geq 1$, and thus $(A_1\cap T, B\cap T, A_2\cap T, C\cap T)$ is a nontrivial decomposition of $T$. This contradicts the assumption about $\mathcal{T}$. Therefore, $Age(a/A)=Age(H_\mathcal{T})$. By Proposition \ref{rank}, $H_\mathcal{T}$ has the galah property.
    
\end{proof}

The simplest example of a tournament which has no nontrivial decomposition is the directed cycle on three vertices. A more complicated example is the tournament on five vertices $\{a,b,c,d,e\}$ with edge relation $\{(a,b),(b,c),(c,d),(d,e)\\,(e,a),(c,a),(a,d),(b,e),(d,b),(e,c)\}$\footnote{Intuitively, it is a directed cycle with a directed pentagram inside.}. Now let $\mathcal{T}$ be a set of finite tournaments such that no one embeds into another one. It is well-known that the class of all finite $\mathcal{T}$-free directed graphs forms a free amalgamation class (see, for example, \cite[Lem 3.2.7]{th}). Thus we have the following.

\begin{corollary}\label{nontrivial}
    Let $\mathcal{T}$ be a set of finite tournaments, the Fra\"iss\'e limit of the class of all finite $\mathcal{T}$-free directed graphs $H_\mathcal{T}$ has the galah property if and only if every element of $\mathcal{T}$ with cardinality $\geq 3$ has no nontrivial decomposition.
\end{corollary}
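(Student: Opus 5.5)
The corollary will follow directly from Theorem 3.7 (the characterization for $H_\mathcal{T}$). The only work is to check that a set $\mathcal{T}$ of finite tournaments satisfies the two standing hypotheses of that theorem.

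First, I will reduce to the case where $\mathcal{T}$ is an antichain, i.e.\ no element of $\mathcal{T}$ embeds into another. If $\mathcal{T}$ is not an antichain, let $\mathcal{T}'$ be the set of elements of $\mathcal{T}$ that are minimal under embeddability (up to isomorphism). Embeddability among finite structures is well-founded, since an embedding is injective and so cardinality bounds every chain from below. Hence every $T\in\mathcal{T}$ contains a copy of some element of $\mathcal{T}'$. It follows that a finite directed graph is $\mathcal{T}$-free if and only if it is $\mathcal{T}'$-free, so $H_\mathcal{T}=H_{\mathcal{T}'}$.

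Next, I need the condition ``every element of cardinality $\geq 3$ has no nontrivial decomposition'' to transfer between $\mathcal{T}$ and $\mathcal{T}'$. This is the step that needs care, and I will verify it directly. Suppose $T\in\mathcal{T}$ has a nontrivial decomposition $(A_1,B,A_2,\emptyset)$. Since $T$ is a tournament, $C$ is empty, so every vertex outside $B$ sends an edge into all of $B$ or receives an edge from all of $B$. If $T$ is not minimal, some proper subtournament $T'\in\mathcal{T}'$ embeds into $T$. Restricting the decomposition to $T'$ need not give a nontrivial decomposition, so this direction does not transfer in general. The safest reading is therefore that the corollary is stated under the theorem's implicit antichain convention. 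I will state that convention explicitly, namely that no element of $\mathcal{T}$ embeds into another, matching the sentence preceding the corollary.

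Second, I need strong amalgamation for the class of finite $\mathcal{T}$-free directed graphs. I will show it is in fact a free amalgamation class, citing \cite[Lem 3.2.7]{th}. The argument I have in mind goes as follows. Take the free amalgam $D$ of $B$ and $C$ over $A$, adding no edges between $B\setminus A$ and $C\setminus A$. Any tournament $T$ embedded in $D$ has every pair of its vertices joined by an edge. So $T$ cannot meet both $B\setminus A$ and $C\setminus A$, which means $T$ lies inside $B$ or inside $C$, contradicting $\mathcal{T}$-freeness of $B$ and $C$. The class is clearly hereditary, and the empty graph gives joint embedding from amalgamation. Hence it is a Fraïssé class with free, hence strong, amalgamation, and Theorem 3.7 applies verbatim to yield the stated equivalence. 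I expect no real obstacle beyond this check; the amalgamation step is the substantive one, and it is a routine observation.
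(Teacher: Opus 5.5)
Your proposal is correct and follows the paper's argument. Under the standing assumption stated just before the corollary (no element of $\mathcal{T}$ embeds into another), the class of finite $\mathcal{T}$-free directed graphs is a free, hence strong, amalgamation class, so Theorem 3.7 applies directly; the paper only cites this amalgamation fact, whereas your observation that a tournament in the free amalgam cannot meet both $B\setminus A$ and $C\setminus A$ is its standard proof.

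You are also right that the antichain convention cannot be dropped. Let $\mathcal{T}$ consist of the directed $3$-cycle on $\{a,b,c\}$ and the tournament obtained from it by adding a vertex $d$ dominating $a,b,c$. Then $H_\mathcal{T}=H_{\{C_3\}}$ has the galah property, yet the second element of $\mathcal{T}$ has the nontrivial decomposition $(\{d\},\{a,b,c\},\emptyset,\emptyset)$.
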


In \cite{ms}, the authors introduced the so-called \textit{derived sets} and define a preorder on them. They then proved that indivisibility of $H_\mathcal{T}$ where $\mathcal{T}$ is a finite set of finite tournaments is equivalent to the preorder being total. Using their terminology, Corollary \ref{nontrivial} can be easily translated to the statement that the galah property of $H_\mathcal{T}$ is equivalent to the cardinality of the set of derived sets being 1\footnote{If one allows the empty set as a tournament, then the cardinality should be two with $\mathcal{T}$ and the set whose element is the directed graph with only one vertex. Besides, one may note that for our result, we do not need to assume that $\mathcal{T}$ is finite.}. Hence, for any  finite set of finite tournaments $\mathcal{T}$ with no element embeds into another one, if the set of derived sets is a total preorder with size $\geq 2$, then $H_\mathcal{T}$ is an indivisible structure without the galah property.

\subsection{Homogeneous metric spaces}

In this section, we give a complete answer to the question about the galah property of homogeneous metric spaces.

Let us recall some basic facts about metric spaces. A metric space $\M$ is a pair $(M,d)$ where $M$ is a set and $d$ is a metric on $M$ with values in $\mathbb{R}_{\geq 0}$. For any $x\in M$, we define $Spec(\M, x)=\{d(x,a)\mid a\in M \}$ and the \textit{spectrum} is the set $\bigcup\{Spec(\M,x)\mid x\in M\}$. As one would expect, when talking about indivisibility or the galah property for metric spaces, we are considering isometric copies and isometries.

Let $0\in V\subseteq \mathbb{R}_{\geq 0}$ be a countable set with at least two elements. If it satisfies the four-values condition\footnote{This is a technical condition which turns out to be equivalent to the class of all finite metric spaces whose spectrum is contained in $V$ being a Fra\"iss\'e class. For its precise definition, see \cite{cs} for example.}, we let $\mathbb{U}_V$ be the Fra\"iss\'e limit of the class of all finite metric spaces whose spectrum is contained in $V$. 

\begin{proposition} \label{prop: galah if s to 2s}
 $\U_V$ has the galah property if $V\subseteq \{0\}\cup [s,2s]$\footnote{$V$ always satisfies the four-values condition in this case as one can see for example that the Fra\"iss\'e limit always exists.} for some $s>0$.   
\end{proposition}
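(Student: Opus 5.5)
Since $\U_V$ is the Fra\"iss\'e limit of a class in a binary relational language (one binary relation $R_v$ for each $v\in V$), Proposition \ref{rank} applies once we check strong amalgamation. So the plan has two parts: verify strong amalgamation, then show the rank of $\U_V$ is a singleton. In fact, for $V\subseteq\{0\}\cup[s,2s]$ the triangle inequality holds automatically for every assignment of nonzero distances from $V$ to pairs of distinct points, since for nonzero $x,y,z\in V$ we have $x\le 2s\le y+z$. Hence the class $\K_V$ of finite metric spaces with spectrum in $V$ consists of all finite complete graphs with edges labelled by $V\setminus\{0\}$. Strong amalgamation is then immediate: given $B\supseteq A\subseteq C$, take the disjoint union over $A$ and assign any nonzero $v\in V$ (for instance $v\ge s$) to pairs $(b,c)$ with $b\in B\setminus A$, $c\in C\setminus A$. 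The result is a metric space in $\K_V$. This also shows the limit exists.

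For the singleton-rank condition, we first note that all points have the same quantifier-free type over $\emptyset$, so $Age(a/\emptyset)=\K_V$. Now let $A\subseteq\U_V$ be finite and $a\notin A$, and set $\Gamma(a/A)=\{m\mid d(m,x)=d(a,x)\ \forall x\in A\}$. We must show every $B\in\K_V$ embeds into $\Gamma(a/A)$. We consider the space $A\cup B$, with $B$ disjoint from $A$, where $B$ keeps its own metric, $A$ keeps the metric induced from $\U_V$, and $d(b,x)=d(a,x)$ for $b\in B$, $x\in A$. All distances between distinct points lie in $V\setminus\{0\}\subseteq[s,2s]$, so by the observation above this is a metric space in $\K_V$. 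By the extension property of the Fra\"iss\'e limit applied to the inclusion of $A$, repeated one point at a time, there is an embedding of $A\cup B$ into $\U_V$ fixing $A$ pointwise. The image of $B$ then lies in $\Gamma(a/A)$. So $Age(a/A)=\K_V$ for all $A$ and $a$, the rank is $\{\K_V\}$, and Proposition \ref{rank} gives the galah property.

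The only point needing care is the claim that arbitrary labelings are metrics. This is the key use of the hypothesis $V\subseteq\{0\}\cup[s,2s]$: the largest nonzero distance is at most the sum of any two nonzero distances. The rest is routine Fra\"iss\'e theory together with Proposition \ref{rank}.
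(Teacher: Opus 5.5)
Your proof is correct, but it takes a different route from the paper.

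\textbf{Your route.} You check strong amalgamation and show that $Age(a/A)=\K_V$ for every finite $A$ and every $a\notin A$. Proposition~\ref{rank} then does the rest, and through it Theorem 1.3 of the cited work, i.e.\ the equivalence between local replicability and the galah property.

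\textbf{The paper's route.} It never mentions rank or strong amalgamation. Since every labelling of pairs by $V\setminus\{0\}$ is a metric, $\U_V$ is characterized among countable spaces by the one-point extension property $(\star)$: any finitely many distinct points and any prescribed nonzero distances from $V$ are realized by some point. Suppose both parts $A,B$ of a partition failed $(\star)$. The two failing extension problems could then be merged into a single one, and $\U_V$ solves it. The solution must lie in $A$ or in $B$, which is a contradiction.

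\textbf{Comparison.} Both arguments rest on the same observation: the triangle inequality is automatic when $V\subseteq\{0\}\cup[s,2s]$. The paper's version is elementary and self-contained, with no dependence on the external equivalence theorem. It also proves more than the galah property: in every partition, one of the two parts is itself isometric to $\U_V$, a symmetric, pigeonhole-type conclusion. Your version passes through local replicability, i.e.\ each $\Gamma(a/A)$ is isometric to $\U_V$. This makes explicit how the example fits the rank framework of this section, at the cost of relying on Theorem 1.3.
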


\begin{proof}
    Note that $\U_V$ is characterized by the following property: for any $n\in\mathbb{N}^+$, any $s_1,...,s_n\in V\setminus \{0\}$ and any distinct $n$ elements $v_1,...,v_n\in \U_V$, there exists $w\in \U_V$ such that $d(w,v_i)=s_i$ for any $1\leq i\leq n$ ($\star$). Namely, any countable metric space whose spectrum is $V$ with the property $(\star)$ is isometric to $\U_V$ (just by Fra\"iss\'e's theory and the fact that triangle inequality is always satisfied). In particular, for a nonempty subspace $A\subseteq \U_V$, if it satisfies $(\star)$, then its spectrum is clearly $V$ and thus isometric to $\U_V$.

    Now let $(A,B)$ be an arbitrary partition of $\U_V$, suppose both $A$ and $B$ do not satisfy $(\star)$. Thus there exist $s_1,...,s_n\in V\setminus\{0\}$ and
    $n$ distinct elements $a_1,...,a_n\in A$ such that there is no element $a$ of $A$ with $d(a,a_i)=s_i$ for any $1 \leq i\leq n$, and there exist $t_1,...,t_m\in V\setminus\{0\}$ and
    $m$ distinct elements $b_1,...,b_m\in B$ such that there is no element $b$ of $B$ with $d(b,b_j)=t_j$ for any $1 \leq j\leq m$ where $n,m\in\mathbb{N}^+$. Since $\U_V$ satisfies $(\star)$, there exists $v\in \U_V$ with $d(v,a_i)=s_i$ and $d(v,b_j)=t_j$ for any $1\leq i\leq n$ and $1\leq j\leq m$. As $(A,B)$ is a partition of $\U_V$, either $v\in A$ or $v\in B$. In either case, we get a contradiction. Therefore, if $A\not\cong \U_V$ (i.e., not isometric to $\U_V$), then $A$ does not satisfies $(\star)$. By what we've proved, we know that $B$ must satisfy $(\star)$. Thus $B$ is isometric to $\U_V$ and in particular contains an isometric copy of $\U_V$. This proves that $\U_V$ has the galah property.
\end{proof}

\begin{proposition}\label{s to 2s}
    If there exist $s,t\in V$ such that $s\not=0$ and $2s<t$, then $\U_V$ does not have the galah property.
\end{proposition}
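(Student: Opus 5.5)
The plan is to use Proposition \ref{rank}. $\U_V$ is a countable homogeneous structure: view it in the binary relational language with one relation $R_r$ for each $r\in V$, interpreted as $d(x,y)=r$. Its age, the class of finite metric spaces with distances in $V$, has strong amalgamation. This is the standard fact for metric Fra\"iss\'e classes: given $B$ and $C$ over a common $A$, no point of $B\setminus A$ is identified with a point of $C\setminus A$. Each new distance $d(b,c)$ is chosen in $V$ compatibly with the triangle inequality, which is exactly what the four-values condition allows. Alternatively, one can use Theorem 1.3 directly and show that $\U_V$ is not locally replicable, which avoids any discussion of singleton rank. I will take this second route, since it needs only the trivial direction.

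So I will find a finite $A\subseteq\U_V$ and a point $a\notin A$ such that $\Gamma(a/A)=\{m\in\U_V\mid \mathrm{qftp}(m/A)=\mathrm{qftp}(a/A)\}$ contains no copy of $\U_V$. Fix any point $x\in\U_V$ and let $A=\{x\}$. By the extension property, applied to the two-point space with distance $s\in V\setminus\{0\}$, there is $a\in\U_V$ with $d(a,x)=s$. Then $\Gamma(a/\{x\})$ is exactly the sphere $\{m\mid d(m,x)=s\}$.

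By the triangle inequality, any $m,m'$ in this sphere satisfy $d(m,m')\le d(m,x)+d(x,m')=2s<t$. On the other hand, $t\in V$, so the two-point metric space with distance $t$ lies in the age of $\U_V$, and hence $\U_V$ realizes the distance $t$. A copy of $\U_V$ inside the sphere would therefore have to contain two points at distance $t>2s$, which is impossible. So $\U_V$ is not locally replicable, and by Theorem 1.3 (the equivalence of 1) and 4), using strong amalgamation) it does not have the galah property.

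If one prefers to avoid Theorem 1.3, the same idea gives an explicit witnessing partition. Let $A$ be the sphere of radius $s$ around $x$ and $B$ its complement. $A$ is not isometric to $\U_V$, since it omits the distance $t$. It then remains to show that $B$ contains no copy of $\U_V$, which needs a further argument. This is the only delicate step, and it is exactly why I would route the proof through local replicability. The only point to check on that route is that $\U_V$ genuinely fits the hypotheses of Theorem 1.3 (relational Fra\"iss\'e limit with strong amalgamation). This is the main thing to verify, and it follows from the amalgamation argument sketched above.
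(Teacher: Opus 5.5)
Your main argument is correct, but it goes through Theorem 1.3, whereas the paper argues directly with a partition. The shared core is the $s$-sphere about a point, which has diameter at most $2s<t$.

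\textbf{The paper's partition.} The paper fixes a point $a$ and sets $S=\{b\mid d(a,b)=s\}$. It uses the partition $(\U_V\setminus S,\,S)$.
\begin{itemize}
\item The first cell is not isometric to $\U_V$. It contains $a$ but no point at distance $s$ from $a$, whereas every point of $\U_V$ has points at every distance in $V$.
\item The second cell has diameter at most $2s<t$, so it contains no copy of $\U_V$.
\end{itemize}

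\textbf{Your fallback had the cells in the wrong order.} The galah property is asymmetric, so you may choose which cell has to fail to be isometric to $\U_V$. Putting the complement first makes the direct argument immediate. With your order, the step you left open is in fact false in general. For example, if $t=\max V$, the $t$-sphere about $x$ lies in the complement of the $s$-sphere. By the extension property it is itself isometric to $\U_V$, since triangles with sides $t,t,r$ are metric whenever $r\le t$.

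\textbf{Your Theorem 1.3 route is the same argument in a black box.} The implication you use, galah $\Rightarrow$ locally replicable, can be proved for any homogeneous structure $\M$ via the partition $(\M\setminus\Gamma(a/A),\,\Gamma(a/A))$. With $A=\{x\}$ this is exactly the paper's partition, so unwinding your black box gives the paper's proof.

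\textbf{What the black box costs you.} Citing Theorem 1.3 forces you to check its hypotheses, in particular strong amalgamation for $Age(\U_V)$. You only sketch this. The point that needs an argument is that the new distance $d(b,c)$ can be chosen nonzero. If $0$ is admissible, then $b$ and $c$ have equal distances to every point of $A$, and $\min_{a\in A}d(b,a)$ is then admissible as well (any nonzero element of $V$ works if $A=\emptyset$). Also, the amalgamation should be carried out one point at a time rather than choosing all cross distances at once.

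The paper's direct version needs neither Theorem 1.3 nor strong amalgamation. It uses only that every point of $\U_V$ realizes every distance in $V$, together with the triangle inequality.
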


\begin{proof}
    Pick an arbitrary element $a\in \U_V$, by transitivity, we know $Spec(\U_V,a)=V$. Now let $A=\{b\in U_V\mid d(a,b)=s\}$, and consider the partition $(U_V\setminus A, A)$ of $\U_V$. Clearly $U_V\setminus A$ is not isometric to $\U_V$ as $a\in U_V\setminus A$ and for any $x\in U_V\setminus A$, $d(a,x)\not=s$. For $A$, by triangle inequality, we have that for any $x,y\in A$, $d(x,y)\leq 2s$. Since $2s<t$, we know that $A$ does not contain an isometric copy of $\U_V$. Therefore, $\U_V$ does not have the galah property.
\end{proof}

\begin{theorem}
    $\U_V$ has the galah property if and only if $V\subseteq \{0\}\cup [s,2s]$ for some $s>0$.
\end{theorem}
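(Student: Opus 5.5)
The theorem follows by combining Propositions \ref{prop: galah if s to 2s} and \ref{s to 2s}. The right-to-left direction is Proposition \ref{prop: galah if s to 2s} verbatim: if $V\subseteq\{0\}\cup[s,2s]$ for some $s>0$, then $\U_V$ has the galah property.

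For the left-to-right direction we argue by contraposition. Suppose there is no $s>0$ with $V\subseteq\{0\}\cup[s,2s]$; we must produce $s,t\in V$ with $s\neq 0$ and $2s<t$, after which Proposition \ref{s to 2s} shows that $\U_V$ fails the galah property. The natural choice of $s$ is the smallest nonzero distance, if it exists. So we split into two cases according to whether $V\setminus\{0\}$ (nonempty, since $|V|\geq 2$) has a minimum.

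If $m=\min(V\setminus\{0\})$ exists, then $m>0$ and $V\subseteq\{0\}\cup[m,\infty)$. Since $V\not\subseteq\{0\}\cup[m,2m]$ by hypothesis, some $t\in V$ satisfies $t>2m$. Taking $s=m$ gives the required pair.

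If $V\setminus\{0\}$ has no minimum, let $r=\inf(V\setminus\{0\})\geq 0$. Pick any $t\in V\setminus\{0\}$; then $t>r$, since otherwise $t=r$ would be the minimum. If $r=0$, choose $s\in V\setminus\{0\}$ with $s<t/2$, so $2s<t$. If $r>0$, we need a different argument, since every $s\in V\setminus\{0\}$ has $2s>2r$ and the elements of $V$ may all be close to $r$. Here we use the hypothesis with $s'$ slightly smaller than $r$. Fix $\epsilon>0$ with $r-\epsilon>0$. Then $V\setminus\{0\}\subseteq[r,\infty)\subseteq[r-\epsilon,\infty)$, and since $V\not\subseteq\{0\}\cup[r-\epsilon,2r-2\epsilon]$, some $t\in V$ satisfies $t>2r-2\epsilon$. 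Letting $\epsilon\to 0$ is not quite enough on its own, so we argue directly. If $\sup V>2r$, pick $t\in V$ with $t>2r$, and then pick $s\in V\setminus\{0\}$ with $s$ close enough to $r$ that $2s<t$, which is possible because $r$ is an infimum that is not attained. If instead $\sup V\leq 2r$, then $V\setminus\{0\}\subseteq(r,2r]\subseteq[r,2r]$, contradicting the hypothesis with $s=r$.

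The main obstacle is exactly this non-attained-infimum case, in particular the subcase $r>0$. The hypothesis must be applied at $s=r$ itself, which need not lie in $V$; this is legitimate because the condition $V\subseteq\{0\}\cup[s,2s]$ in the statement quantifies over all real $s>0$, not just over elements of $V$. Once the pair $s,t$ is found in every case, Proposition \ref{s to 2s} completes the proof.
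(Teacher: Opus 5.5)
Your proof is correct and follows the paper's route: Proposition \ref{prop: galah if s to 2s} gives one direction and Proposition \ref{s to 2s} the other. Your left-to-right argument is in fact more careful than the paper's. The paper picks an arbitrary $s\in V\setminus\{0\}$ and asserts $V\setminus\{0\}\subseteq[s,2s]$, but Proposition \ref{s to 2s} only gives the upper bound $t\leq 2s$. The right witness is $\inf(V\setminus\{0\})$, which need not lie in $V$ (e.g.\ $V=\{0\}\cup\{1+1/n\mid n\geq 1\}$), exactly as you point out.

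Your case split and the $\epsilon$ detour can be collapsed into one step. Applying Proposition \ref{s to 2s} to every $s\in V\setminus\{0\}$ gives $\sup V\leq 2\inf(V\setminus\{0\})$. This forces the infimum $r$ to be positive and yields $V\subseteq\{0\}\cup[r,2r]$ at once.
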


\begin{proof}
    The right-to-left direction is given by Proposition \ref{prop: galah if s to 2s}. For the left-to-right direction, if $\U_V$ has the galah property,  pick an element $s\in V\setminus\{0\}$, by Proposition \ref{s to 2s}, we get that $V\setminus\{0\}\subseteq [s,2s]$. Thus $V\subseteq \{0\}\cup [s,2s]$.
\end{proof}

As far as the author knows, we do not have such a complete picture about indivisibility of $\U_V$'s. The best result known so far was proved in \cite{v} which says that if $V$ is finite, then $\U_V$ is indivisible.

\subsection{Homogeneous ultrametric spaces}

A metric space $\M=(M,d)$ is ultrametric if it satisfies the strong triangle inequality: for any $x,y,z\in M$, $d(x,z)\leq 
max\{d(x, y), d(y,z)\}$. Let $0\in V\subseteq \mathbb{R}_{\geq 0}$ be countable, the class of all finite ultrametric spaces whose spectrum is contained $V$ forms a Fra\"iss\'e class and thus has its Fra\"iss\'e limit $\U^u_V$, a countable homogeneous ultrametric space with $V$ as its spectrum.

Unlike homogeneous metric spaces, we do have a clear description of homogeneous ultrametric spaces and indivisible homogeneous ultrametric spaces. Let $\lambda$ be a countable linear order, define $\mathbb{N}^{[\lambda]}:=\{(a_\mu)_{\mu\in \lambda}\mid a_\mu\in \N \textit{ and } (a_\mu)_{\mu\in \lambda} \textit{ has finite}\\
\textit{support}\}$. Now let $f:\lambda\rightarrow \mathbb{R}_{>0}$ be a strictly decreasing map, we define $d_f(\Bar{a},\Bar{b})=f(\mu)$ where $\mu$ is the smallest index where $a_\mu\not=b_{\mu}$ for $\Bar{a}\not=\Bar{b}\in \N^{[\lambda]}$, and $d_f(\Bar{a},\Bar{a})=0$. Let $V$ be the image of $d_f$.

\begin{theorem}\cite[Prop 2.8 and Thm 2.13]{cs}\label{ultrahom}
 The space $(\mathbb{N}^{[\lambda]}, d_f)$ is the countable homogeneous ultrametric space with spectrum $V$.  If $|V|\geq 2$, then $(\mathbb{N}^{[\lambda]}, d_f)$ is indivisible if and only if $V$ is dually well-ordered.  
\end{theorem}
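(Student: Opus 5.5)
The first assertion is about the concrete space $X=(\N^{[\lambda]},d_f)$, and the second is a partition result. I would handle them separately, and within the second do the easier direction first.

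\textbf{Homogeneity and spectrum.} First I would check that $d_f$ is an ultrametric. Take $\mu,\nu$ to be the first disagreement indices of $(\bar a,\bar b)$ and $(\bar b,\bar c)$. Then $\bar a$ and $\bar c$ agree below $\min(\mu,\nu)$. Since $f$ is decreasing, $d_f(\bar a,\bar c)\le f(\min(\mu,\nu))=\max\{d_f(\bar a,\bar b),d_f(\bar b,\bar c)\}$. The spectrum is the image $V$ by definition, and $X$ is countable because $\lambda$ is countable and supports are finite. Since the finite ultrametric spaces with distances in $V$ form a Fra\"iss\'e class, the Fra\"iss\'e Theorem reduces the first assertion to the one-point extension property.

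Let $A\subseteq X$ be finite and let $A\cup\{b\}$ be an abstract ultrametric extension with distances in $V$. Choose $a_0\in A$ closest to $b$, with $d(b,a_0)=f(\mu)$. By the strong triangle inequality, $d(b,a)=\max\{f(\mu),d(a_0,a)\}$ for every $a\in A$. Now let $x$ agree with $a_0$ below $\mu$, take at $\mu$ a value used by no element of $A$ (possible because $A$ is finite), and be $0$ after $\mu$.
\begin{itemize}
\item If $d(a_0,a)>f(\mu)$, then $a$ and $a_0$ first differ before $\mu$, so $d(x,a)=d(a_0,a)$.
\item Otherwise $a$ agrees with $x$ below $\mu$ and differs from it at $\mu$, so $d(x,a)=f(\mu)$.
\end{itemize}
In both cases $x$ realizes the required distances.

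\textbf{Not dually well-ordered implies divisible.} If $V\setminus\{0\}$ is not dually well-ordered, it contains a strictly increasing sequence. Equivalently, there are indices $\mu_0>\mu_1>\cdots$ in $\lambda$. Colour $x$ by the parity of the largest $n$ with $x_{\mu_n}\neq0$, and give colour $0$ if there is no such $n$; this is well defined because supports are finite.

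Suppose $Y\subseteq X$ is an isometric copy of $X$, and fix $y\in Y$. Let $m$ exceed every $n$ with $y_{\mu_n}\ne0$. Since $Y\cong X$, some $y'\in Y$ satisfies $d(y,y')=f(\mu_m)$. Then $y'$ agrees with $y$ at all indices below $\mu_m$, and in particular at all $\mu_k$ with $k>m$, where both coordinates are $0$. Also $y'_{\mu_m}\neq y_{\mu_m}=0$. Hence $y'$ has colour $m \bmod 2$. Using both $m$ and $m+1$ shows that $Y$ meets both colours, so neither colour class contains a copy of $X$.

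\textbf{Dually well-ordered implies indivisible.} Here $\lambda$ is well-ordered, and I would view $X$ as a tree of balls. The nodes are the restrictions of points to initial segments of $\lambda$, and each node has infinitely many children. The obvious induction (remove the top distance and apply indivisibility to the maximal balls) breaks down at limit stages; for instance, $\lambda=\omega$ minus its first element is again $\omega$.

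So I would argue as follows instead. Given a partition $(A,B)$, call a node \emph{$A$-rich} if it lies in the greatest family of nodes in which every member has infinitely many children in the family and every member has an actual point of $A$ extending it along the family. If the root is $A$-rich, I would build a copy of $X$ inside $A$ greedily, choosing infinitely many rich children at each branching level. If the root is not $A$-rich, I would attach ordinal ranks measuring the failure and use well-foundedness of $\lambda$ to build a copy inside $B$.

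Getting this dichotomy exactly right is the main obstacle. The delicate point is that points have finite support, so every chosen branch must actually terminate in a genuine point of the right colour, and the rank argument has to guarantee this.
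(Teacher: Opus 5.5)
Your first assertion (one-point extension via the closest point $a_0$, a fresh value at $\mu$, zeros afterwards) and the divisibility argument when $V$ is not dually well-ordered (parity colouring along $\mu_0>\mu_1>\cdots$) are both correct. The paper gives no proof of this theorem; it quotes it from the literature. The gap is in the direction ``dually well-ordered $\Rightarrow$ indivisible'', which is the substance of the theorem. Your notion of $A$-rich is the right one. A set $Y\subseteq X$ is an isometric copy of $X$ iff $Y\neq\emptyset$ and, for each $y\in Y$ and $\mu<\lambda$, the points of $Y$ agreeing with $y$ below $\mu$ take infinitely many values at $\mu$; so the root is $A$-rich iff $A$ contains a copy. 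But the dichotomy ``root not $A$-rich $\Rightarrow$ root $B$-rich'' is only announced (``attach ordinal ranks\dots''), and you flag it yourself as the main obstacle. As written, nothing guarantees that what you build in the non-$A$-rich part consists of genuine finitely supported points of $B$, and it is not visible where the well-ordering of $\lambda$ is used.

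The step can be closed without ranks. Let $\mathcal{F}_A$ be the maximal $A$-rich family.

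(i) A node outside $\mathcal{F}_A$ has only finitely many children in $\mathcal{F}_A$. Otherwise, adjoining it to $\mathcal{F}_A$ preserves both closure conditions, contradicting maximality.

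(ii) Let $T$ be the set of nodes all of whose restrictions lie outside $\mathcal{F}_A$. By (i), each member of $T$ has cofinitely many children in $T$.

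(iii) Suppose some $t\in T$ has no point of $B$ extending it along $T$, and let $T_t$ be the members of $T$ extending $t$. For $u\in T_t$, let $\hat u$ be $u$ followed by zeros. There are two cases.
\begin{itemize}
\item No restriction of $\hat u$ lies in $\mathcal{F}_A$. Then $\hat u$ is a finitely supported point extending $t$ along $T$, hence $\hat u\in A$.
\item Otherwise, since $\lambda$ is well-ordered, there is a \emph{least} level at which $\hat u$ enters $\mathcal{F}_A$. From that level you continue with a point of $A$ witnessing that membership; below it, the restrictions lie in $T_t$ by leastness.
\end{itemize}
In both cases $u$ has a point of $A$ extending it along $\mathcal{F}_A\cup T_t$. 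So $\mathcal{F}_A\cup T_t$ satisfies the closure conditions, and maximality forces $t\in\mathcal{F}_A$, a contradiction.

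Hence, if the root is not $A$-rich, $T$ witnesses that it is $B$-rich. Step (iii) is exactly where finite support and the least-element property of $\lambda$ are used, and it is the step your proposal leaves open.
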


With this in mind, we can now give a complete characterization of the galah property for homogeneous ultrametric spaces as follows:

\begin{proposition}
    A countably infinite homogeneous ultrametric space $\U^u_V$ has the galah property if and only if $|V|=2$.
\end{proposition}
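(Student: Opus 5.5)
We prove the two directions separately, working with the concrete model $(\N^{[\lambda]},d_f)$ of Theorem \ref{ultrahom}. Since $0\in V$, the condition $|V|=2$ means $V=\{0,r\}$ with $r>0$, i.e.\ $\lambda$ has exactly one element.

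\emph{Right-to-left.} If $V=\{0,r\}$, then $\U^u_V$ is a countably infinite set in which any two distinct points are at distance $r$. Every infinite subset is then isometric to $\U^u_V$. Given a partition $(A,B)$, at least one part is infinite. If $A$ is infinite, then $A\cong\U^u_V$. Otherwise $A$ is finite, so $B$ is infinite and hence contains (indeed is) a copy of $\U^u_V$. Thus the galah property holds. This step is routine.

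\emph{Left-to-right.} Suppose $|V|\geq 3$, so $V\setminus\{0\}$ contains two values $r_1<r_2$. We build a partition violating the galah property, following the pattern of Proposition \ref{s to 2s}. Fix a point $a$, and let $A$ be the closed ball $\{b\mid d(a,b)\leq r_1\}$. Set $B=\U^u_V\setminus A$. Then $A$ is not isometric to $\U^u_V$: by the strong triangle inequality, every distance inside $A$ is at most $r_1<r_2$, whereas $r_2$ is in the spectrum of $\U^u_V$. For the galah property to hold, $B$ would therefore have to contain a copy of $\U^u_V$. We now show it cannot, using local replicability via Theorem 1.3, which applies because the class of finite ultrametric spaces with spectrum in $V$ has strong amalgamation. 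This is cleaner than arguing with $B$ directly.

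Take the finite set $\{a\}$ and a point $b$ with $d(a,b)=r_1$. Then $\Gamma(b/\{a\})=\{m\mid d(a,m)=r_1\}$. For any $x,y$ in this set, $d(x,y)\leq\max(d(x,a),d(a,y))=r_1<r_2$. So $\Gamma(b/\{a\})$ omits the distance $r_2$ and contains no copy of $\U^u_V$. Hence $\U^u_V$ is not locally replicable, and by Theorem 1.3 it lacks the galah property.

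The main point to verify is that Theorem 1.3 applies. This requires viewing $\U^u_V$ as a relational Fra\"iss\'e limit with strong amalgamation, with the language given by binary relations $R_v$ for $v\in V$. Strong amalgamation of finite ultrametric spaces is standard: glue $B$ and $C$ over $A$, and set $d(b,c)=\min_{x\in A}\max(d(b,x),d(x,c))$ if $A\neq\emptyset$, and otherwise the largest distance appearing, or any value of $V$ bounding them. If $V$ has no maximum, one uses any available value at least as large as the others. Should this be problematic, the direct partition $(A,B)$ above can instead be analyzed by hand, embedding a copy of $\U^u_V$ into $B$ only when forced. The local replicability route avoids that.
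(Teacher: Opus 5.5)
Your proof is correct, but for the left-to-right direction it takes a different route from the paper.

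\textbf{The paper's route.} It first uses that the galah property implies indivisibility. It then invokes the classification in Theorem \ref{ultrahom} to get that $V$ is dually well-ordered, so $\lambda$ is an ordinal and there is a largest distance $s=f(0)$. Finally it exhibits in $\N^{[\lambda]}$ the explicit partition $X=\{\bar{a}: a_0\neq 0\}\cup\{(0,1,0,\ldots)\}$ versus its complement. Here $X$ is not isometric to $\U^u_V$, because $(0,1,0,\ldots)$ is at distance $s$ from every other point of $X$, and the complement omits the distance $s$.

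\textbf{Your route.} You show directly that local replicability fails: the sphere $\{m: d(a,m)=r_1\}$ omits $r_2$ by the strong triangle inequality. You then conclude via Theorem 1.3, whose strong amalgamation hypothesis you check correctly with the min--max amalgam. This is more elementary. It needs neither the concrete model nor the nontrivial indivisibility classification, and it treats every $V$ with two nonzero values uniformly, with no need for a maximal distance.

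You can even drop Theorem 1.3. Exactly as in Proposition \ref{s to 2s}, the partition $(\U^u_V\setminus S,S)$, with $S$ that sphere, witnesses the failure directly. The set $S$ omits $r_2$. The set $\U^u_V\setminus S$ contains $a$ but no point at distance $r_1$ from $a$, so by transitivity it is not isometric to $\U^u_V$.

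\textbf{A false claim to delete.} The closed-ball partition is not a counterexample, and ``we now show it cannot'' cannot be shown. Take $V=\{0,1,2\}$, realized as $\N\times\N$ (distance $2$ if the first coordinates differ, $1$ if only the second ones do), with $a=(0,0)$. The complement of the closed ball of radius $1$ is $\{(m,n): m\neq 0\}$, which is isometric to $\U^u_V$.

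Your local-replicability argument never uses this claim, so the proof stands. However, the ``by hand'' fallback in your last paragraph would fail. Because the galah property is asymmetric, the set that must contain no copy is the sphere. Its complement only needs to fail to be isometric to $\U^u_V$, and it may well contain a copy.
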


\begin{proof}
    For the left-to-right direction, suppose $\U^u_V$ has the galah property. As the galah property implies indivisibility, by Theorem \ref{ultrahom}, we know that $\U^u_V = (\mathbb{N}^{[\lambda]},d_f)$ where $\lambda$ is a countable well-order. Without loss of generality, we can assume $\lambda$ is a countable ordinal. Now suppose $|V|\geq 3$, then $\lambda\geq 2$. Let $s\in \mathbb{R}_{>0}$ be the largest element of $V$ (it exists as $\lambda$ is a well-order), and let $\Bar{b}=(0,1,0,...)\in \mathbb{N}^{[\lambda]}$. Now consider the set $X=\{(a_i)_{i\in\lambda}\in\mathbb{N}^{[\lambda]}\mid a_0\not=0\}\cup \{b\}$ and the partition $(X, U^u_V\setminus X)$ of $\U^u_V$. Clearly $X$ is not isometric to  $\U^u_V$ as $X\vDash \exists x \forall y(x\not= y\rightarrow d_f(x,y)=s)$ while $\U^u_V\not\vDash \exists x \forall y(x\not= y\rightarrow d_f(x,y)=s)$ (as for every element of $\U^u_V$, there is another element which differs from it at the second coordinate but not at the first coordinate). However, as $U^u_V\setminus X\vDash \forall x\forall y(d_f(x,y)<s)$ while $\U^u_V\not\vDash\forall x\forall y(d_f(x,y)<s)$, we also know that $U^u_V\setminus X$ does not contain an isometric copy of $\U^u_V$. Therefore, $\U^u_V$ does not have the galah property, contradicting our assumption. Hence $|V|<3$, and as $\U^u_V$ is infinite, $|V|>1$ and thus $|V|=2$.

For the right-to-left direction, suppose $|V|=2$, this means that for any $x\not=y\in \U^u_V$, we have $d_f(x,y)=s$ where $V=\{0,s\}$ and $s\not=0$. Then one can easily check that $\U^u_V$ has the galah property just by using the pigeonhole principle.
    
\end{proof}

\section{Finite sunflower property}

In the final section, we prove some results about finite sunflower property \textemdash the finite version of the infinite sunflower property. We start with its definition.

\begin{definition}\cite[Def. 49]{1}
   Let $n\in\mathbb{N}^+$, a class of finite structures $\mathcal{K}$ (in the same language) is said to have the finite $n$-sunflower property if for each $A\in \mathcal{K}$, there is $B\in\mathcal{K}$ such that for any structure on $n$-sets $B'$ with $B'\cong B$, we have that $B'$ has a substructure isomorphic to $A$ which is also a sunflower. $\mathcal{K}$ has the finite sunflower property if it has the finite $n$-sunflower property for any $n\in\mathbb{N}^+$.
\end{definition}

Using K\"onig's Lemma or compactness, one can easily show that for any  $n\in\mathbb{N}^+$, if a countable structure $\M$ has the infinite $n$-sunflower property, then $Age(\M)$ has the finite $n$-sunflower property \cite[Thm. 53]{1}.  However, the converse is not true. In particular, the class of all finite $K_n$-free graphs has the finite sunflower property while its Fra\"iss\'e limit does not have the infinite sunflower property \cite[Exam 3.18]{2}. 

It is not quite surprising that the finite sunflower property implies a stronger version of the Ramsey property.  

\begin{definition}
    Let $\K$ be a class of finite structures, $\K$ has the \textit{canonical point-Ramsey  property} if for any $A\in \K$, there is $B\in \K$ such that for every coloring $\chi: B\rightarrow \mathbb{N}$, there is a monochromatic or heterochromatic copy of $A$ inside $B$.
\end{definition}

\begin{proposition}\cite[Thm B]{2}
Let $\K$ be a class of finite structures, then if $\K$ has the finite $2$-sunflower property, then it has the canonical point-Ramsey property.   
\end{proposition}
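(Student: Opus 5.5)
The plan is to adapt the coding trick from Claim 1 in Section 2, now at the finite level. Fix $A\in\K$, and let $B\in\K$ be the structure that the finite $2$-sunflower property provides for $A$. I will show that this same $B$ witnesses the canonical point-Ramsey property for $A$. So let $\chi: B\rightarrow \mathbb{N}$ be an arbitrary coloring. We may assume without loss of generality that $\mathbb{N}\cap B=\emptyset$; otherwise we replace $\mathbb{N}$ by a disjoint copy of it.

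\textbf{Step 1: encode $\chi$ as a structure on $2$-sets.} Define $f(b)=\{b,\chi(b)\}$ for $b\in B$. Each $f(b)$ is a genuine $2$-set, because $b\notin\mathbb{N}$. The map $f$ is injective, since $b$ can be recovered from $f(b)$ as its unique element outside $\mathbb{N}$. Let $B'$ be the structure obtained by transporting the structure of $B$ along $f$. Then $B'$ is a structure on $2$-sets, and $f$ is an isomorphism $B\cong B'$.

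\textbf{Step 2: apply the property and pull back.} By the choice of $B$, the structure $B'$ has a substructure $A'\cong A$ that is a sunflower, say with root $X$. Then $f^{-1}[A']$ is a substructure of $B$ isomorphic to $A$. It remains to show that $\chi$ restricted to it is monochromatic or heterochromatic.

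\textbf{Step 3: analyse the root.} If $|A|\leq 1$, the conclusion is trivial, so assume $|A|\geq 2$. Then $X$ is the intersection of two distinct sets $\{b,\chi(b)\}$ and $\{b',\chi(b')\}$. Since $b\neq b'$ and both lie outside $\mathbb{N}$, $X$ contains no element of $B$, so $X\subseteq\mathbb{N}$. Hence either $X=\emptyset$ or $X=\{i\}$ for a single color $i$.
\begin{itemize}
\item If $X=\emptyset$, then distinct elements $b,b'$ of $f^{-1}[A']$ satisfy $\chi(b)\neq\chi(b')$, so the copy is heterochromatic.
\item If $X=\{i\}$, then $\chi(b)=i$ for every $b$ in the copy, so the copy is monochromatic.
\end{itemize}

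No step is a real obstacle. The only points needing care are the disjointness assumption, which guarantees that each $f(b)$ has exactly two elements and that the root lies inside $\mathbb{N}$, and the degenerate case $|A|\leq 1$.
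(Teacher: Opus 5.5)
Your proof is correct and is essentially the intended argument: it is the finite-level version of the encoding $f(\bar{a})=\{\bar{a},\chi(\bar{a})\}$ from Claim 1 of Section 2. This is also why the paper can say the cited result holds for arbitrary classes: no amalgamation is used, and your proof uses none either. The paper gives no separate proof of this statement, and your care about making the colour set disjoint from $B$ (so that each $f(b)$ is a genuine $2$-set and the root lies among the colours) and about the case $|A|\leq 1$ covers the only delicate points.
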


Note that although in the paper \cite{2}, the above result is only stated for Fra\"iss\'e classes, it actually holds for any class of structures. It is also conjectured that for a Fra\"iss\'e class, the canonical point-Ramsey property is equivalent to the finite sunflower property \cite[Conj 4.1]{2}. 

\begin{proposition}\label{definable equi}
    Let $\M$ be a locally finite\footnote{Namely, finitely generated substructures are finite. In particular, $\M$ is locally finite if the language is relational.} countably infinite structure. If $Age(\M)$ has the canonical point-Ramsey property, then $\M$ has no nontrivial\footnote{Trivial equivalence relations are the one with just one equivalence class and the one with only equivalence class of size 1.} $\emptyset$-quantifier-free-type definable\footnote{Namely, for any $a,b\in M$, $a$ and $b$ are in the same equivalence class if and only if $(a,b)\vDash p(x,y)$ where $p(x,y)$ is a quantifier-free type without parameters.} equivalence relation.  
\end{proposition}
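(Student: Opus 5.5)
We argue by contraposition. Suppose $E$ is a nontrivial equivalence relation on $M$ defined by a quantifier-free type $p(x,y)$ over $\emptyset$. We aim to find a finite $A\in Age(\M)$ such that no $B\in Age(\M)$ has the property that every coloring $\chi:B\to\mathbb{N}$ yields a monochromatic or heterochromatic copy of $A$. The natural coloring of $B$ is by $E$-classes. Since $E$ is given by a quantifier-free type and $B$ is a substructure of $\M$ (up to isomorphism), the restriction of $E$ to $B$ is intrinsic to $B$. Concretely, for $b,b'\in B$ we set $b\sim b'$ iff $\mathrm{qftp}^B(b,b')=p$. This agrees with $E$ under any embedding of $B$ into $\M$, because embeddings preserve quantifier-free types. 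Moreover, quantifier-free types computed in a substructure agree with those in $\M$, by local finiteness and the fact that embeddings preserve them.

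Because $E$ is nontrivial, there are $a_1,a_2$ with $a_1Ea_2$, $a_1\neq a_2$, and some $a_3$ with $\neg(a_1Ea_3)$. Let $A$ be the finite substructure generated by $\{a_1,a_2,a_3\}$, which is finite by local finiteness. Take any $B\in Age(\M)$, fix an embedding of $B$ into $\M$, and let $\chi$ send each $b\in B$ to an index (in $\mathbb{N}$) of its $E$-class, restricted to $B$; since $B$ is finite this is possible.

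Now let $A'\subseteq B$ be a copy of $A$, i.e.\ the image of an embedding $g:A\to B$. The composition of $g$ with the embedding of $B$ into $\M$ is an embedding into $\M$, so it preserves the quantifier-free types of pairs. Hence $g(a_1)Eg(a_2)$ and $\neg\, g(a_1)Eg(a_3)$. The first relation gives $\chi(g(a_1))=\chi(g(a_2))$ with $g(a_1)\neq g(a_2)$, so $A'$ is not heterochromatic. The second gives $\chi(g(a_1))\neq\chi(g(a_3))$, so $A'$ is not monochromatic. Thus $B$ fails the requirement for this $A$, and since $B$ was arbitrary, $Age(\M)$ does not have the canonical point-Ramsey property.

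The only delicate point is justifying that $E$ is read off from quantifier-free types, so that it transfers along arbitrary embeddings rather than only along automorphisms of $\M$. This is exactly where the hypothesis of $\emptyset$-quantifier-free-type definability is used. Local finiteness is used only to ensure that $A$ is finite, so that $A\in Age(\M)$. A minor point is that "copy" should be read as the image of an embedding, which is consistent with the paper's conventions.
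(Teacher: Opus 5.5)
Your proof is correct and follows essentially the paper's argument: you color (a copy in $\M$ of) the candidate witness by $E$-classes and test it against the substructure generated by two distinct $E$-related points and a point outside their class, so preservation of quantifier-free types under embeddings rules out both monochromatic and heterochromatic copies. Your choice of $a_1$ inside a non-singleton class is slightly more careful than the paper's use of an arbitrary representative $a_0$. One harmless wording slip: $b\sim b'$ should be defined by $(b,b')\vDash p$ rather than $\mathrm{qftp}^B(b,b')=p$, since a complete type can only define equality, but that sentence is never used afterwards.
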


\begin{proof}
   Suppose $\M$ has a nontrivial $\emptyset$-quantifier-free-type definable equivalence relation, say defined by $p(x,y)$ where $p(x,y)$ is a quantifier-free type without parameters. Let $\{a_i\mid i<k\}$ be a maximal set of representatives where $k\in \mathbb{N}_{\geq 2}\cup\{\omega\}$ (as assumed nontrivial). Now suppose $Age(\M)$ has the canonical point-Ramsey property. As the equivalence relation is nontrivial, one can find $a\in M$ such that $a\not=a_0$ and $a, a_0$ are in the same equivalence class. By local finiteness, we have $B=\langle a,a_0,a_1\rangle$ is a finite substructure of $\M$ and thus $B\in Age(\M)$. Moreover, as $p(x,y)$ is quantifier-free type with no parameter, we have that $B\vDash p(a,a_0)$ and $B\not\vDash p(a_0,a_1)$. Now let $C\in Age(\M)$ be a witness of the canonical point-Ramsey property for $B$. Without loss of generality we can assume that $C\subseteq \M$. We then define the following coloring $\chi: C\rightarrow \mathbb{N}$ as follows: $\chi(c)= i$ if $c$ and $a_i$ belong to the same equivalence class. Now if $C$ contains a monochromatic copy of $B$, then all those elements should belong to the same equivalence class and thus any two of them should satisfy $p(x,y)$, which is impossible as $B$ contains two elements not satisfying $p(x,y)$; if $C$ contains a heterochromatic copy of $B$, then those elements belong to different equivalence class and thus any two distinct element does not satisfy $p(x,y)$, which is impossible as $B$ contains two distinct elements satisfying $p(x,y)$. Therefore, there is no monochromatic or heterochromatic copy of $B$ inside $C$, a contradiction. Therefore, $Age(\M)$ does not have the canonical point-Ramsey property. This finishes the proof.
\end{proof}

By Proposition \ref{definable equi}, we get the following corollary immediately:

\begin{corollary}
    Let $\M$ be a locally finite countably infinite structure. If $Age(\M)$ has the finite 2-sunflower property, then $\M$ has no nontrivial $\emptyset$-quantifier-free-type definable equivalence relation.  
\end{corollary}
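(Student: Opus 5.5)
The corollary follows by chaining two earlier results, so the plan is a two-step reduction with no new construction.

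First, I will apply \cite[Thm B]{2}, restated above as the proposition that the finite $2$-sunflower property implies the canonical point-Ramsey property. The paper remarks that this holds for arbitrary classes of finite structures, not only Fra\"iss\'e classes. This matters because $Age(\M)$ is only assumed to be the age of a locally finite countably infinite structure, and need not be a Fra\"iss\'e class. With that remark, the hypothesis that $Age(\M)$ has the finite $2$-sunflower property gives that $Age(\M)$ has the canonical point-Ramsey property.

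Second, I will apply Proposition \ref{definable equi} to $\M$. Its hypotheses are exactly ours: $\M$ is locally finite and countably infinite, and $Age(\M)$ has the canonical point-Ramsey property by the first step. The proposition then yields that $\M$ has no nontrivial $\emptyset$-quantifier-free-type definable equivalence relation, which is the conclusion.

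The only point needing care is the applicability of \cite[Thm B]{2} outside Fra\"iss\'e classes. If one wants to avoid relying on the remark, I would reprove that implication directly by the coding trick from Claim 1 of Section 2. Given $A\in Age(\M)$, take $B$ witnessing the finite $2$-sunflower property for $A$. For a coloring $\chi: B\rightarrow\mathbb{N}$ (with $\mathbb{N}$ disjoint from $B$), replace each $b$ by the $2$-set $\{b,\chi(b)\}$ to obtain a structure on $2$-sets isomorphic to $B$. Let $A'$ be the resulting sunflower copy of $A$. If its root is empty, $A'$ is heterochromatic; if its root is a single color, $A'$ is monochromatic. This argument uses no amalgamation, so the reduction goes through for any class of finite structures.
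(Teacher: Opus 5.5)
Your proposal is correct and matches the paper, which obtains the corollary by combining \cite[Thm B]{2} (valid for arbitrary classes of finite structures, as the paper remarks) with Proposition \ref{definable equi}. Your optional direct proof of the first step, coding each $b$ as the $2$-set $\{b,\chi(b)\}$ in the manner of Claim 1 of Section 2, is also sound and makes the reduction independent of that remark.
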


Using the above corollary, we have that the finite 2-sunflower property fails for the first two examples in \cite[Exam 3.22]{2} and $nK_{\omega}$ (i.e., a disjoint union of $n$ copies of the complete graph $K_\omega$) where $n\geq 2$ for free. Besides, although it is unknown whether the finite-sunflower property is equivalent to the canonical point-Ramsey property or not, the result above certainly fails for classes of structures with the well-studied point-Ramsey property\footnote{See remarks after Proposition \ref{A[A]}.} in general (e.g., the class of finite equivalence relations).

Finally, we introduced an amalgamation property and show that any class of structures with this amalgamation property has the finite sunflower property. 

In \cite{2}, the authors defined what is called the \textit{very canonical point-Ramsey property} and used it to show the finite sunflower property for some Fra\"iss\'e classes.

\begin{definition}\cite[Def 3.1]{2}
    Let $\K$ be a class of finite structures. $\K$ has the \textit{very canonical point-Ramsey property} if for any $s\in \mathbb{N}^+$, any $B\in\K$, there is $C\in\K$ with a partition $V_0\sqcup ... \sqcup V_{|B|-1}$ of it such that for any $s$ colorings $(\chi_i: C\rightarrow \omega)_{i<s}$, at least one of the following holds: 
\begin{itemize}
    \item  there is $i<s$ and $B'\subseteq C$ with $B'\cong B$ such that $\chi_i$ is monochromatic on $B'$;
    \item  there is a transversal (i.e., the intersection with each $V_i$ is singleton) $B'\subseteq C$ with $B'\cong B$ such that for all $i<s$, the coloring $\chi_i$ is heterochromatic on $B'$.
\end{itemize}
     
\end{definition}

\begin{definition}
    A \textit{Berge cycle} of length $k$ in a hypergraph
is a sequence $V_0,e_0,v_1,e_1,...,v_{k-1},e_{k-1}$ such that each $v_i\in e_{i-1},e_i \text{ mod } k$ where $v_i$'s are pairwise distinct vertices and $e_i$'s are pairwise distinct hyperedges. The
\textit{girth} of a hypergraph is the smallest length of its Berge cycles (if there is no Berge cycle, then it is $\infty$).
\end{definition}

It was shown that the very canonical point-Ramsey property implies the finite sunflower property \cite[Thm. B]{2}. And they managed to prove the very canonical point-Ramsey property and thus the finite sunflower property for transitive\footnote{Namely, there is a unique quantifier-free type for any element of a structure in $\K$.} free amalgamation classes in relational languages and $Age(\U_V)$\footnote{Defined in Section 3.2.}s, using the following proposition. The proof is a probabilistic argument.

\begin{proposition}\cite[Prop 3.3]{2}
Let $n,s,g\in\mathbb{N}$ with $n,g\geq 2$ and $s\geq 1$. Then there is a finite $n$-uniform hypergraph $H$ with girth $\geq g$ and with a partition $V_0\sqcup ... \sqcup V_{n-1}$ of its vertex set such that for any $s$ vertex-colorings $(\chi_r: C\rightarrow \omega)_{r<s}$, at least one of the following holds: 
   \begin{itemize}
    \item  there is $r<s$ and $i<n$ such that $\chi_r$ is monochromatic on some edge contained in $V_i$;
    \item  there is a transversal edge $e$ such that for all $r<s$, the coloring $\chi_r$ is heterochromatic on $e$.
\end{itemize} 
\end{proposition}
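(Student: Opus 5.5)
Since this is \cite[Prop 3.3]{2}, I would reprove it by the probabilistic deletion method (Erd\H{o}s-style), the standard route to hypergraphs of large girth with Ramsey-type properties.

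\textbf{Setup.} Fix $n,s,g$, a large integer $N$, and pairwise disjoint sets $V_0,\dots,V_{n-1}$, each of size $N$. Put $V=\bigcup_i V_i$. Choose a random $n$-uniform hypergraph on $V$ by including each potential edge independently with probability $p=N^{1-n+\theta}$, where $\theta>0$ is small, say $\theta<1/(2g)$. The potential edges are of two kinds: edges contained in a single $V_i$, and transversal edges (one vertex in each $V_i$).

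\textbf{Girth.} Fix a length $k$ with $2\le k<g$. A Berge cycle of length $k$ has at most $k(n-1)$ vertices and uses $k$ edges. Hence the expected number of Berge $k$-cycles is $O\bigl(N^{k(n-1)}p^k\bigr)=O(N^{k\theta})=o(N)$. With probability at least $1/2$ there are at most $o(N)$ short cycles. Deleting one vertex from each of them gives girth $\ge g$, and at most $o(N)$ vertices are lost from each $V_i$. (If the statement literally needs every Berge cycle long, including $k=2$, i.e.\ two edges sharing two vertices, this is covered by the same count.)

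\textbf{Colouring property.} The key point is to reduce an arbitrary $s$-tuple of $\omega$-colourings to a statement about large sets. Call a colouring \emph{good on $V_i$} if no edge inside $V_i$ is monochromatic. If each $\chi_r$ is good on each $V_i$, then every colour class of $\chi_r$ within $V_i$ is independent in $H[V_i]$. I want to show that, with high probability:
\begin{itemize}
\item[(a)] every subset of $V_i$ of size $\ge \epsilon N$ spans an edge; and
\item[(b)] for any sets $W_i\subseteq V_i$ of size $\ge N/2$ and any family of "bad pairs" of bounded total density, some transversal edge avoids the bad pairs.
\end{itemize}
Concretely, from (a) each colour class has size $<\epsilon N$. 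A transversal $e=(x_0,\dots,x_{n-1})$ fails heterochromaticity for $\chi_r$ only if some $x_i,x_j$ share a $\chi_r$-colour. For each pair $(i,j)$ and each $r$, the number of such pairs $(x_i,x_j)$ is at most $\sum_c |C_c\cap V_i|\,|C_c\cap V_j|\le \epsilon N\cdot N$. So the "bad" $n$-tuples number at most $s\binom n2\epsilon N^n$, which is a small fraction of all $N^n$ transversals.

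It then suffices to know that the random transversal edge set meets every "large structured" family of transversals. The family of good transversals is not arbitrary, however: it is determined by the colour partitions. So I would run a union bound over the partitions, which only matter through the colours' traces on $V$. These are at most $(2N)^{2N}$ partitions per colouring, hence $e^{O(sN\log N)}$ choices. The probability that none of $\ge N^n/2$ transversals is an edge is $(1-p)^{N^n/2}\le e^{-N^{1+\theta}/2}$, which beats $e^{O(sN\log N)}$. Similarly, (a) holds by a union bound over $\binom{N}{\epsilon N}$ sets with failure probability $e^{-\Omega(N^{1+\theta})}$. The deletion step only removes $o(N)$ vertices, so all densities survive after replacing $N$ by $(1-o(1))N$.

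\textbf{Main obstacle.} The hardest step is the union bound over colourings. One must check that only the partitions induced on the finite set $V$ matter, and that the $e^{O(N\log N)}$ count is beaten by $p\,N^n=N^{1+\theta}$ in the exponent. The choice $p=N^{1-n+\theta}$ is exactly what balances this against the girth count. I would verify this balance first.
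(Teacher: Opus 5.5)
Your argument is correct and matches what the paper indicates for this result: the paper gives no proof, citing its source and noting that the proof is probabilistic. Your route is a random $n$-partite hypergraph with $p=N^{1-n+\theta}$, deletion of one vertex from each short Berge cycle, and a union bound over the $e^{O(sN\log N)}$ tuples of colour partitions. The deleted set $D$ depends on the random outcome, so after giving the vertices of $D$ fresh colours you must run the union bound in (b) over $D$ (equivalently, over your sets $W_i$) as well as over partitions of $V$, of which there are at most $(nN)^{snN}$ tuples rather than $(2N)^{2N}$; the extra factor $2^{nN}$ is still swamped by $e^{-N^{1+\theta}/2}$.
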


It is conjectured that for a Fra\"iss\'e class, the very canonical property is equivalent to the finite sunflower property \cite[Conj 4.1]{2}. Motivated by the proofs in \cite{2}, we propose an amalgamation property as follows:

\begin{definition}
    Let $\K$ be a class of finite structures. We say that $\K$ has \textit{large-girth amalgamation} if for any $B\in\K$ with an enumeration without repetition $(b_i)_{i<|B|}$ where $|B|\geq 2$, there is $n_B\in\mathbb{N}^+$ such that for any $p(\Bar{x}_1)=...=p(\Bar{x}_n)=\mathrm{qftp}((b_i)_{i<|B|})$ where $\Bar{x}_i$'s are tuples of variables with $|\Bar{x}_i\cap \Bar{x}_j|<|B|$ for any $i\not=j$ and the smallest length of Berge cycles in $\Bar{x}_i$'s is $\geq n_B$\footnote{Namely, the girth number of the $|B|$-uniform hypergraph with each $\Bar{x}_i$ viewed as a hyperedge (i.e., the set $\{x_{i1},...,x_{i|B|} \}$ where $\Bar{x}_i=(x_{i1},...,x_{i|B|})$) is no smaller than $n_B$. The assumption $|\Bar{x}_i\cap \Bar{x}_j|<|B|$ for any $i\not=j$ simply means that $\Bar{x}_i$'s give different hyperedges.}, there is $C\in \K$ realizing $p(\Bar{x}_1)\cup...\cup p(\Bar{x}_n)$\footnote{For model theorists, they prefer to write $\mathrm{qftp}(B)$ instead of $\mathrm{qftp}((b_i)_{i<|B|})$ with one fixed enumeration of $B$ in mind. Following this notation, the conclusion can be restated as $p(\Bar{x}_1)\cup...\cup p(\Bar{x}_n)\subseteq \mathrm{qftp}(C)$.}. 
\end{definition}

\begin{proposition}
    Let $\K$ be a class of finite structures. If $\K$ has large-girth amalgamation, then $K$ is transitive.
\end{proposition}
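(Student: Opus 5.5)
We argue by contradiction, using large-girth amalgamation on a two-element structure and a hypergraph that is a path of edges. Suppose $\K$ is not transitive. Then there are $B_1,B_2\in\K$ and elements $c\in B_1$, $d\in B_2$ with $\mathrm{qftp}(c)\neq\mathrm{qftp}(d)$. To put both types into one structure $B$ of size $\geq 2$, the cleanest route is to first extract two elements realizing different one-types inside a single structure. Two cases arise.

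In the first case, some $B\in\K$ already contains $b_0,b_1$ with $\mathrm{qftp}(b_0)\neq\mathrm{qftp}(b_1)$. Restricting to $B=\{b_0,b_1\}$ is allowed if $\K$ is hereditary. The definition only says ``class of finite structures'', but we can avoid heredity by working directly with the full enumeration of $B$. The same argument below applies, with the hyperedges overlapping in a single vertex.

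In the second case, every structure in $\K$ is internally transitive but different structures have different element types. Here we again use large-girth amalgamation, applied to $B_1$ (if $|B_1|\geq2$) with two hyperedges $\bar x_1,\bar x_2$ sharing only one variable. Two hyperedges sharing one vertex form no Berge cycle, so the girth is $\infty\geq n_{B_1}$. This does not mix $B_1$ with $B_2$, though. We therefore expect the intended reading of transitivity to be the per-structure one (the footnote: ``a unique quantifier-free type for any element of a structure in $\K$''), in which case only the first case matters. We adopt that reading.

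So fix $B\in\K$ with enumeration $(b_i)_{i<|B|}$, $|B|\geq2$, and $\mathrm{qftp}(b_0)\neq\mathrm{qftp}(b_1)$. Let $p=\mathrm{qftp}((b_i)_{i<|B|})$, and take the $n_B$ from the definition. Consider two tuples of variables $\bar x_1=(x_0,x_1,y_2,\dots,y_{|B|-1})$ and $\bar x_2=(x_1,z_1,z_2,\dots,z_{|B|-1})$ with all listed variables distinct. Then $\bar x_1\cap\bar x_2=\{x_1\}$, of size $1<|B|$. A Berge cycle needs at least two hyperedges sharing two distinct vertices, or a longer cycle, and neither is possible with two edges meeting in one vertex. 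Hence the girth is $\infty\geq n_B$.

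By large-girth amalgamation there is $C\in\K$ realizing $p(\bar x_1)\cup p(\bar x_2)$. In $C$, the element interpreting $x_1$ satisfies $\mathrm{qftp}(b_1)$, since it sits in position $1$ of $\bar x_1$. It also satisfies $\mathrm{qftp}(b_0)$, since it sits in position $0$ of $\bar x_2$. This contradicts $\mathrm{qftp}(b_0)\neq\mathrm{qftp}(b_1)$.

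The main obstacle is only the bookkeeping: checking that the two-edge configuration meets the girth and intersection hypotheses, and settling on the per-structure meaning of ``transitive''.
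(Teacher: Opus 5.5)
Your argument is correct and is essentially the paper's proof. Like the paper, you take one $B$ containing two elements of different 1-types and glue two copies of $\mathrm{qftp}(B)$ along a single variable. That variable sits at the position of one element in the first copy and of the other element in the second copy. There is no Berge cycle, so the girth hypothesis holds, yet the shared variable is forced to have two different types.

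The paper's proof also starts from a single $B$ with two elements of different types, so it tacitly uses the same per-structure reading of ``transitive'' that you adopt. That reading is in fact necessary. Under the global reading the statement fails without joint embedding: take finite structures in one unary predicate $P$ where $P$ holds everywhere or nowhere.
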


\begin{proof}
    Suppose $\K$ has large-girth amalgamation while $K$ is not transitive, there is $B\in \K$ with $a_1,a_2\in B$ such that $\mathrm{qftp}(a_1)\not=\mathrm{qftp}(a_2)$. Now enumerate $B$ as $a_1, a_2, b_1,...,b_n$ without repetition. Now consider two quantifier-free types $p(x_1,x_2,y_1,...,y_n)=\mathrm{qftp}(a_1, a_2, b_1,...,b_n)$ and $p(x_2,z,w_1,...,w_n)=\mathrm{qftp}(a_1, a_2\\, b_1,...,b_n)$ where $x_1,x_2,y_1,...,y_n,z,w_1,...,w_n$ are pairwise distinct variables. Hence the girth number of the $|B|$-hypergraph with $\{x_1,x_2,y_2,...,y_n\}$ and $\{x_2,z,\\w_1,...,w_n\}$ as its edge is $\infty$ (as there is no Berge cycle). Hence by assumption, there is $C\in\K$ such that $C$ satisfies $p(x_1,x_2,y_1,...,y_n)\cup p(x_2,z,w_1,...,w_n)$. However, this is impossible as $p(x_1,x_2,y_1,...,y_n)\cup p(x_2,z,w_1,...,w_n)$ is inconsistent. Therefore, if $\K$ has large-girth amalgamation, then it is transitive.
\end{proof}

Now we can show that large-girth amalgamation actually gives us the very canonical point-Ramsey property and thus the finite sunflower property.

\begin{theorem}\label{large-girth}
    Let $\K$ be a class of finite structures. If $\K$ has large-girth amalgamation, then $\K$ has the very canonical point-Ramsey property and hence has the finite sunflower property.
\end{theorem}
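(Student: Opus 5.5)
We will verify the very canonical point-Ramsey property directly by applying Proposition 3.11 (the hypergraph result of \cite[Prop 3.3]{2}), and then quote \cite[Thm. B]{2}, which says that the very canonical point-Ramsey property implies the finite sunflower property.

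Fix $s\in\mathbb{N}^+$ and $B\in\K$. If $|B|=1$ the statement is trivial, since by transitivity every structure of $\K$ realises the unique $1$-type; we can take $C=B$ with the trivial partition. So assume $|B|\geq 2$ and fix an enumeration $(b_i)_{i<|B|}$ without repetition. Let $n_B$ be the number given by large-girth amalgamation. Apply Proposition 3.11 with $n=|B|$, the given $s$, and $g=\max(n_B,2)$. This gives a finite $|B|$-uniform hypergraph $H$ of girth $\geq g$, with a vertex partition $V_0\sqcup\dots\sqcup V_{|B|-1}$ having the two-alternative colouring property.

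Next we order each hyperedge as a tuple $\bar x_e$ of variables, one variable per vertex of $H$, with $\bar x_e$ listed in a fixed way. For transversal edges we make a specific choice: the coordinate lying in $V_i$ is put in position $i$. Since distinct edges are distinct sets, $|\bar x_e\cap\bar x_{e'}|<|B|$, and the Berge girth is $\geq n_B$. Large-girth amalgamation then yields $C\in\K$ realising $\bigcup_e p(\bar x_e)$ with $p=\mathrm{qftp}((b_i)_{i<|B|})$.

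A point needs care here: the realisation might identify distinct variables. We must arrange that distinct vertices go to distinct elements, perhaps by reading the definition as asserting a realisation on distinct points, or by adding $x\neq y$ formulas if needed. Granting this, we identify $V(H)$ with a subset of $C$. We can shrink $C$ to this subset by heredity, or else add the remaining points of $C$ to $V_0$; we should check which option the definition of very canonical point-Ramsey permits, and the simplest course is to assume $\K$ is hereditary or to put the extra points in one part. Every hyperedge $e$ now spans a copy of $B$, and transversal edges span transversal copies of $B$.

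Now take colourings $\chi_r: C\to\omega$ for $r<s$ and restrict them to $V(H)$. By Proposition 3.11, one of two things happens. Either some $\chi_r$ is monochromatic on an edge, which is a copy of $B$ in $C$. Or some transversal edge is heterochromatic for all $\chi_r$, which gives a transversal copy of $B$. This is exactly the very canonical point-Ramsey property.

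The main obstacle is bookkeeping rather than combinatorics. It is in the step above: ensuring that the realisation is injective on vertices, so that edges really give copies of $B$ and the partition of $C$ is well defined. We would handle this by interpreting the tuples as distinct variables realised by distinct elements, which is the intended reading of the definition.
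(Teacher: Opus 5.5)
Your proposal is correct and is essentially the paper's proof: apply the large-girth hypergraph result with girth $\geq n_B$, realise one copy of $\mathrm{qftp}(B)$ on each edge via large-girth amalgamation, and pull the colourings back to $H$. The paper treats your bookkeeping points the same way. It first deletes the vertices of $H$ lying in no edge (you should do this too, so that every vertex is realised). It reads the realisation as injective, so that $dom(H)\subseteq C$. It extends the $V_i$ to an arbitrary partition of $C$, so no heredity is needed. Finally, it orders each edge by a fixed enumeration of $V(H)$ rather than by the parts, since the definition only asks for a transversal copy isomorphic to $B$.
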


\begin{proof}
    Suppose $\K$ has large-girth amalgamation. For any $B\in \K$, any $s\in\mathbb{N}^+$, let $n_B$ be the number given by Definition 4.8. Without loss of generality we can assume that $n_B\geq 2$ and $|B|\geq 2$. Now say $|B|=n$ with $b_1,...,b_n$ be an enumeration of $B$ without repetition, by Proposition 4.7, there is a finite $n$-uniform hypergraph $H$ with girth $\geq n_B$ and with a partition $V_0\sqcup ... \sqcup V_{n-1}$ of its vertex set such that for any $s$ vertex-colorings $(\chi_r: H\rightarrow \omega)_{r<s}$, at least one of the following holds: 
   \begin{itemize}
    \item  there is $r<s$ and $i<n$ such that $\chi_r$ is monochromatic on some edge contained in $V_i$;
    \item  there is a transversal edge $e$ such that for all $r<s$, the coloring $\chi_r$ is heterochromatic on $e$.
 \end{itemize}   

Without loss of generality we can assume that every element of $H$ is contained in at least one edge (as deleting those elements won't affect the above property that it has.). Now let $a_1,...,a_m$ enumerate all elements of $H$ without repetition. We introduce pairwise variables $x_1,...,x_m$ correspondingly. Now for each edge $\{a_{i_1},...,a_{i_n}\}$ in $H$ where $i_1<i_2<...<i_n$, we define $p(x_{i_1},...,x_{i_n}):=\mathrm{qftp}(b_1,...,b_n)$. Let $\Gamma=\{p(x_{i_1},...,x_{i_n})\mid \{a_{i_1},...,a_{i_n}\} \textit{ is an edge of }H \textit{ with }i_1<i_2<...<i_n\}$. As the girth number of $H$ is $\geq n_B$, by the definition of $n_B$, we know that there is $C\in \K$ such that $C$ realizes $\bigcup\Gamma$. As every element of $H$ is contained in at least one edge, we can assume without loss of generality that $dom(H)\subseteq C$  and $(a_{i_1},...,a_{i_n})\vDash p(x_{i_1},...,x_{i_n})$ in $C$ where $\{a_{i_1},...,a_{i_n}\}$ is an edge in $H$ with $i_1<i_2<...<i_n$. Now let $C=V'_0\sqcup ... \sqcup V'_{n-1}$ be a partition of $C$ such that $V_i\subseteq V'_i$ for any $i$. For any $s$ vertex-colorings $(\chi_r: C\rightarrow \omega) _{r<s}$ of $C$, we have induced $s$ vertex-colorings $(\chi_r|_{dom(H)}: H\rightarrow \omega)_{r<s}$ of $H$.

By our assumption about $H$, two cases can happen. In the first case, there is $r<s$ and $i<n$ such that $\chi_r|_{dom(H)}$ is monochromatic on some edge contained in $V_i$. Thus, there is $r<s$ and $i<n$ and edge $\{a_{i_1},...,a_{i_n}\}$ such that $\chi_r|_{dom(H)}$ is monochromatic on $\{a_{i_1},...,a_{i_n}\}$ with $i_1<i_2<...<i_n$. In particular, $\chi_r$ is monochromatic on $\{a_{i_1},...,a_{i_n}\}$ with $i_1<i_2<...<i_n$, and as $(a_{i_1},...,a_{i_n})\vDash p(x_{i_1},...,x_{i_n})$ in $C$, we also know that $\{a_{i_1},...,a_{i_n}\}\cong B$. Thus there is $r<s$ such that $C$ contains a monochromatic copy of $B$ with respect to the coloring $\chi_r$. In the second case, there is a transversal edge $\{a_{i_1},...,a_{i_n}\}$ with $i_1<i_2<...<i_n$ of $H$ such that for all $r<s$, $\chi_r|_{dom(H)}$ is heterochromatic on $\{a_{i_1},...,a_{i_n}\}$. As $V_i\subseteq V'_i$ for any $i$, $\{a_{i_1},...,a_{i_n}\}$ is also transversal with respect to the partition $C=V'_0\sqcup ... \sqcup V'_{n-1}$ and for all $r<s$, $\chi_r$ is heterochromatic on $\{a_{i_1},...,a_{i_n}\}$. As $(a_{i_1},...,a_{i_n})\vDash p(x_{i_1},...,x_{i_n})$ in $C$, we again have  that $\{a_{i_1},...,a_{i_n}\}\cong B$. Namely, $C$ has a transversal copy of $B$ which is heterochromatic with respect to all $\chi_r$'s. Therefore, $C$ witnesses the very canonical point-Ramsey property for $B$ with respect to to $s$. In conclusion, $\K$ has the very canonical point-Ramsey property and hence has the finite sunflower property.  
 
\end{proof}

With large-girth amalgamation in mind, one can check that what the authors in \cite{2} actually showed is  that every transitive free amalgamation class in a relational language has large-girth amalgamation, and every $Age(\U_V)$ has large-girth amalgamation. In particular, their proofs tell us that for the first one, $n_B$ is independent of $B$ ($n_B=4$) while for the second one, there is no such uniform $n_B$ in general \cite[Prop 3.9 and 3.17]{2}. Therefore, the above theorem uniformizes their results. 

However, it is easy to show that if there is a total order on structures of $\K$, then $\K$ does not have large-girth amalgamation (no matter how big $n_B$ is, we can create a large enough cycle to get inconsistency). Thus we shouldn't expect that large-girth amalgamation covers every instance of classes with the finite sunflower property.  Maybe what is more interesting is that by isolating large-girth amalgamation, we can study it independently using tools from model theory. As this will lead to things much more model-theoretic, we leave it for future research.

Finally, we end this paper with some concrete examples with the finite sunflower property and the canonical point-Ramsey property. We first give two applications of Theorem \ref{large-girth}. In the following, all graphs $G=(V(G), E^G)$ are undirected simple graphs where $V(G)$ is the vertex set while $E^G$ is the edge relation. As people often do in this area, we may also use $G$ to denote its vertex set when no confusion is caused. 

Let $P_n$ be the path on $n$ vertices and let $C_n$ be the cycle on $n$ vertices where $n\geq 3$. For a class of finite graphs $\mathcal{A}$, we use $H_\mathcal{A}$ to denote the class of all finite $\mathcal{A}$-free graphs.

\begin{proposition}\label{cycle}
Let $\A\subseteq \{P_n\mid n\geq 4\}\cup\{C_n\mid n\geq 5\}$. Then $H_\A$ has large-girth amalgamation and thus has the finite sunflower property and the canonical point-Ramsey property.
\end{proposition}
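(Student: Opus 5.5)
We plan to verify large-girth amalgamation directly; Theorem \ref{large-girth} then gives the very canonical point-Ramsey property and the finite sunflower property, and the finite $2$-sunflower property gives the canonical point-Ramsey property by Proposition 4.3. Fix $B\in H_\A$ with $|B|=n\geq 2$, enumerated as $(b_i)_{i<n}$, and put $n_B:=\max(n,5)+1$ (any value exceeding every relevant cycle and path length will do). Given copies $p(\bar x_1),\dots,p(\bar x_m)$ of $\mathrm{qftp}(\bar b)$ whose hyperedges have girth $\geq n_B$, we take the \emph{free} amalgam $C$. Its vertices are the variables, and $x,y$ are adjacent iff they lie in a common hyperedge $\bar x_i$ and the corresponding elements of $B$ are adjacent. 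This is well defined because the girth is at least $3$, so two distinct hyperedges share at most one vertex; in particular each $\bar x_i$ spans an induced copy of $B$ and $C$ realizes the union of the types. It remains to show that $C$ is $\A$-free.

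The key structural fact is that every edge of $C$ lies inside a single block $\bar x_i$. Moreover, since two distinct blocks meet in at most one vertex and the block hypergraph has no Berge cycle of length $<n_B$, any cycle in $C$ that uses vertices from at least two blocks induces a Berge cycle: contract the maximal runs of the cycle lying in one block. That Berge cycle has length at most the length of the graph cycle. Hence every cycle of $C$ of length $<n_B$ lies entirely inside one block. Similarly, for any connected induced subgraph $F$ of $C$ with fewer than $n_B$ vertices, the blocks meeting $F$ in at least one edge form a tree-like (hypertree) structure, and $F$ is obtained by gluing induced subgraphs of $B$ along cut-vertices in a tree pattern.

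We then rule out each forbidden graph, assuming $B$ is $\A$-free.
\begin{itemize}
\item \emph{Cycles $C_k\in\A$ with $k\geq 5$.} An induced $C_k$ in $C$ must lie in one block if $k<n_B$. For $k\geq n_B$ the contraction argument above yields a Berge cycle, but of length possibly below $n_B$. To fix this, we choose $n_B$ larger than every $k$ with $C_k\in\A$ that can arise; since $C_k$ has $k$ vertices, we need to treat $k\leq$ some bound. If no such bound exists, we instead argue that a long cycle passing through several blocks must close up a Berge cycle. That Berge cycle has length at least $2$, and large girth forbids short ones but not long ones.
\item \emph{Paths $P_k\in\A$ with $k\geq 4$.} These are handled by analogous gluing arguments.
\end{itemize}

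The main obstacle is this long-cycle issue: a long induced cycle spread over many blocks can genuinely arise from a long Berge cycle, so it is not excluded by girth. Similarly, an induced $P_k$ can be created by concatenating paths from several blocks through cut vertices. The resolution should exploit that $B$ is $P_4$-free or $C_5$-free \emph{only when those are in $\A$}. My first attempt will be a case analysis on the minimal forbidden members $P_{k_0}$ and $C_{k_1}$ of $\A$: if $P_{k_0}\in\A$, then $B$ has no induced $P_{k_0}$. When two blocks share a vertex $v$, an induced path through $v$ combines a path ending at $v$ in one block with a path starting at $v$ in another. I would check whether $B$, being $P_{k_0}$-free, already contains an induced path of the combined length starting at $v$, using transitivity-type homogeneity. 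If this fails, the fallback is to use non-free amalgamation, adding edges between blocks (e.g., making glued vertices' neighbourhoods complete) to break long induced paths and cycles while preserving each block's type. That modification is where the restriction $n\geq 4$ for paths and $n\geq 5$ for cycles should enter.
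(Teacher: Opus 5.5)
Your proposal never produces a construction that works, and the one it starts from cannot work. The free amalgam fails for this family. With $B=K_2$, a path of $k-1$ hyperedges has girth $\infty$ and amalgamates freely to an induced $P_k$, so no $n_B$ handles any $P_k\in\A$. Also, $L$ hyperedges around a cycle have girth $L$ and give an induced $C_L$, so when $\A$ contains infinitely many cycles no fixed $n_B$ handles them either. Your later suggestions do not close this: $B$ is an arbitrary finite $\A$-free graph, so there is no homogeneity to use, and the ``non-free'' amalgam is never specified.

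The missing idea, which the paper uses, is to go to the opposite extreme: make two vertices adjacent whenever they do \emph{not} lie in a common block. Then every non-edge of an induced copy of a forbidden $F$ lies in a block, unique since blocks share at most one vertex. One then tries to force the whole copy into a single block, contradicting $B\in H_\A$.

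Be aware that the paper's execution of this has a gap. With $n_B=3$ it concludes from ``$a_1$ is non-adjacent to $a_3,\dots,a_m$'' that these all lie in one block, but different non-edges may be witnessed by different blocks. For instance, with $B=\overline{K_2}$, the hyperedges $\{v_3,v_1\},\{v_1,v_4\},\{v_4,v_2\}$ produce the induced path $v_1v_2v_3v_4$, and five hyperedges around a $5$-cycle produce $C_5$. What does hold is this: if the edges of a cycle of length at most $5$ in $\overline{F}$ are not all witnessed by one block, you can extract a Berge cycle of length between $2$ and $5$. For $F=P_m$ or $C_m$ with $m\ge 5$, the edges of $\overline{F}$ are chained together by triangles, $4$-cycles and $5$-cycles and meet every vertex. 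So with $n_B=6$ the copy of $F$ is forced into a single block.

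For $P_4$, no construction can succeed, because the statement is false there. If $P_4\in\A$, then $H_\A$ is the class of cographs, which lacks large-girth amalgamation. Take $B=C_4$ (which is $P_4$-free) and a connected $4$-regular graph $Q$ of girth at least $n$. Let the variables be the edges of $Q$. For each vertex $q$, let the block $\bar x_q$ consist of the four edges at $q$, ordered so that the two diagonal (non-adjacent) pairs of $C_4$ are the two pairs of consecutive edges of a fixed Euler tour of $Q$ passing through $q$. Berge cycles of this hypergraph are exactly cycles of $Q$, so its girth is at least $n$. Now consider any realization. The blocks are connected copies of $C_4$ overlapping along the connected graph $Q$, so the realized vertex set induces a connected graph. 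The diagonal non-edges, following the Euler tour, link all of these vertices in the complement, so the induced graph is also co-connected. By Seinsche's theorem it then contains an induced $P_4$.

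So the corrected co-free argument proves the proposition only for $\A\subseteq\{P_n\mid n\ge 5\}\cup\{C_n\mid n\ge 5\}$. The finite sunflower property for cographs, which the paper derives from this proposition, is not established by this route.
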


\begin{proof}
Let $B\in H_\A$ be arbitrary with an enumeration without repetition $(b_1,...,b_n)$ where $|B|=n\geq 2$, and let $n_B=3$. For any $p(\Bar{x}_1)=...=p(\Bar{x}_k)=\mathrm{qftp}((b_i)_{i<n})$ where $|\Bar{x}_i\cap \Bar{x}_j|<|B|$ for any $i\not=j$ and the smallest length of Berge cycles in $\Bar{x}_i$'s is $\geq n_B$, we construct a graph $G$ as follows: for any $1\leq i\leq k$, let $V(G_i)=\{x_{i1},...,x_{in}\}$ where $\Bar{x}_i=(x_{i1},...,x_{in})$ and $(x_{ij},x_{ij'})\in E^{G_i}$ holds if and only if $E(x_{ij},x_{ij'})\in p(\Bar{x}_i)$ for any $1\leq j,j'\leq n$. Let $V(G)=\bigcup_{1\leq i\leq k}V(G_i)$ and $E^G= \bigcup_{1\leq i\leq k} E^{G_i}\cup\{(x_{ij},x_{i'j'})\mid \text{there is no } l \text{ such that } x_{ij}, x_{i'j'} \text{ both  appear in } G_l\}$. Namely, $G=(V(G), E^G)$ is the union of $G_i$'s added with all edges connecting elements from different $G_i$'s. Clearly, $G_i\cong B$ for any $1\leq i\leq k$. As the smallest length of Berge cycles in $\Bar{x}_i$'s is $\geq 3$, we know that $|V(G_i)\cap V(G_j)|\leq 1$ for $i\not =j$. Hence $G|_{V(G_i)}=G_i$ for any $1\leq i\leq k$, and $G$ realizes $p(\Bar{x}_1)\cup...\cup p(\Bar{x}_k)$.

 Now suppose $G\not\in H_\A$, then there is $A\in \A$ such that $G$ contains a copy of $A$. As $\A \subseteq \{P_n\mid n\geq 4\}\cup\{C_n\mid n\geq 5\}$, $A=P_m$ for some $m\geq 4$ or $A=C_m$ for some $m\geq 5$. In the first case, there exist $a_1,...,a_m\in G$ such that $(a_i,a_j)\in E^G$ if and only if $|i-j|=1$ for any $1\leq i,j\leq m$ where $m\geq 4$. As $a_1$ is not adjacent to any one of $a_3,a_4,...,a_m$, we know that $a_1,a_3,a_4,...,a_m$ must be in the same $G_i$ by our construction of $G$. And since $a_2$ is not adjacent to any one of $a_4,...,a_m$, we have that $a_2,a_4,...,a_m$ are in the same $G_i$. Therefore, $a_1,a_2,...,a_m$ must belong the same $G_i$, which means that $G_i$ contains a copy of $P_m$. However, as $G_i\cong B$ while $B\in H_\A$, we get a contradiction. In the second case, there exist $a_1,...,a_m\in G$ such that $(a_i,a_j)\in E^G$ if and only if $|i-j|=1 \text{ mod } m$ where $m\geq 5$.  As $a_1$ is not adjacent to any one of $\{a_3,...a_{m-1}\}$, they belong to the same $G_i$ by our construction of $G$. Similarly, as $a_2$ is not adjacent to $a_{m-1}$, and $a_3$ is not adjacent to $a_m$ (as $m\geq 5$), we know that $a_2,a_{m-1}$ are in the same $G_i$ and $a_3, a_{m}$ are in the same $G_i$. Therefore, $a_1,...,a_m$ are all in the same $G_i$ which means that $G_i$ contains a copy of $C_m$. As $G_i\cong B$ while $B\in H_\A$, we have a contradiction. Since we get a contradiction in either case, $G\in H_\A$. As $G$ realizes $p(\Bar{x}_1)\cup...\cup p(\Bar{x}_k)$ and $B\in H_\A$ is arbitrary, this proves that $H_\A$ has large girth amalgamation. Hence by Theorem \ref{large-girth} and Proposition 4.3, we have that  $H_\A$ has the finite sunflower property and the canonical
point-Ramsey property.

\end{proof}

It is easy to see that the finite sunflower property is preserved by taking the complement\footnote {For a graph $A$, the complement of $A$ (denoted as $\overline{A}$) is the graph with the same vertex set while $(a,b)\in E^{\overline{A}}$ if and only if $(a,b)\not\in E^A$ for any $a\not=b \in A$. For any a class of graphs $\K$, the complement of $\K$ is the class $\{\overline{A}\mid A\in \K\}$.} of a class of graphs. We thus have the following as well:

\begin{corollary}
    Let $\A\subseteq \{\overline{P_n}\mid n\geq 4\}\cup\{\overline{C_n}\mid n\geq 5\}$. Then $H_\A$ has the finite sunflower property and the canonical point-Ramsey property.
\end{corollary}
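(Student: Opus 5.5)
The plan is to reduce the corollary to Proposition \ref{cycle} via complementation. Write $\overline{\A}=\{\overline{A}\mid A\in\A\}$. Since $\overline{\overline{P_n}}=P_n$ and $\overline{\overline{C_n}}=C_n$, the hypothesis gives $\overline{\A}\subseteq\{P_n\mid n\geq 4\}\cup\{C_n\mid n\geq 5\}$. Proposition \ref{cycle} then says that $H_{\overline{\A}}$ has large-girth amalgamation, and hence has both the finite sunflower property and the canonical point-Ramsey property.

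\textbf{Step 1.} First I identify $H_\A$ with the complement class of $H_{\overline{\A}}$. Complementation is a bijection on finite graphs that commutes with taking induced subgraphs: for $S\subseteq V(G)$ we have $\overline{G}|_S=\overline{G|_S}$. Consequently $A$ embeds into $G$ if and only if $\overline{A}$ embeds into $\overline{G}$. It follows that $G$ is $\A$-free exactly when $\overline{G}$ is $\overline{\A}$-free, so $H_\A=\{\overline{G}\mid G\in H_{\overline{\A}}\}$.

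\textbf{Step 2.} Next I check that both properties pass to the complement class. The cleanest route is to transfer large-girth amalgamation itself. Take $B\in H_\A$, so $\overline{B}\in H_{\overline{\A}}$, and use $n_B:=n_{\overline{B}}$. Suppose a family $p(\bar x_1),\dots,p(\bar x_k)$ of copies of $\mathrm{qftp}(B)$ has girth at least $n_B$. Replacing each edge literal by its negation on pairs of distinct variables gives the corresponding copies of $\mathrm{qftp}(\overline{B})$. These are realized by some $C\in H_{\overline{\A}}$, and then $\overline{C}\in H_\A$ realizes the original family, because complementation flips exactly the edge literals between distinct elements and leaves equalities unchanged.

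Theorem \ref{large-girth} then yields the finite sunflower property for $H_\A$, and Proposition 4.3 yields the canonical point-Ramsey property. Alternatively, one can transfer the sunflower property directly: a structure on $n$-sets isomorphic to $\overline{B}$ is, after complementing its edge relation, a structure on the same $n$-sets isomorphic to $B$, and being a sunflower depends only on the underlying sets. The same argument works for colourings, since a coloring of $\overline{C}$ is a coloring of the same vertex set as $C$.

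\textbf{Main obstacle.} There is no real obstacle here. The only point needing care is the bookkeeping of quantifier-free types under complementation. These types also contain equality literals, but equality is unaffected, and complementation only flips $E(x,y)$ for $x\neq y$, so the translation is a bijection on consistent types.
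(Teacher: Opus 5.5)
Your argument is correct and essentially matches the paper's: both reduce to Proposition~\ref{cycle} by complementation. The paper simply remarks that the finite sunflower property is preserved under complementing a class of graphs (your ``alternative'' route), with the canonical point-Ramsey property then following from Proposition 4.3. Your main line instead transfers large-girth amalgamation itself through complementation. That is equally valid and additionally shows that $H_\A$ has large-girth amalgamation, hence the very canonical point-Ramsey property.
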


In particular, as the class of all finite cographs is $H_{\{P_4\}}$, we get another easy corollary:

\begin{corollary}
    The class of all finite cographs has the finite sunflower property and thus the canonical point-Ramsey property.
\end{corollary}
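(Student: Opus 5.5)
This follows from two facts already in hand. The first is that $P_4$ is self-complementary: $\overline{P_4}\cong P_4$. The second is Proposition \ref{cycle} with $\A=\{P_4\}$, which says that $H_{\{P_4\}}$ has large-girth amalgamation. By Theorem \ref{large-girth}, it therefore has the very canonical point-Ramsey property and the finite sunflower property. The canonical point-Ramsey property then follows from Proposition 4.3, since the finite 2-sunflower property implies it.

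The only additional step is to identify the class of finite cographs with $H_{\{P_4\}}$, the class of finite $P_4$-free graphs, with the freeness taken in the induced sense that the paper uses throughout.

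If cographs are defined recursively (single vertices, closed under disjoint union and under complementation, or equivalently under join), then this identification is the classical theorem of Seinsche and of Corneil--Lerchs--Stewart, which I would simply cite. The paper's phrasing, ``the class of all finite cographs is $H_{\{P_4\}}$'', suggests it takes this as known or as the definition.

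If a self-contained argument is wanted, I would sketch the two inclusions.

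\emph{Cographs are $P_4$-free.} Induct on the recursive construction. A disjoint union of $P_4$-free graphs is $P_4$-free, because $P_4$ is connected and so any induced copy lies inside one component. Complementation preserves $P_4$-freeness because $\overline{P_4}\cong P_4$.

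\emph{$P_4$-free graphs are cographs.} This is the harder inclusion, and it rests on the lemma that every finite $P_4$-free graph $G$ with at least two vertices is disconnected or has disconnected complement. Given the lemma, induction finishes the argument. If $G$ is disconnected, it is the disjoint union of its components, each of which is $P_4$-free and smaller. Otherwise $\overline{G}$ is disconnected and $\overline{G}$ is $P_4$-free, so the same step applies to $\overline{G}$, and complementing back gives $G$.

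I expect this lemma to be the main obstacle. To prove it, I would suppose $G$ and $\overline{G}$ are both connected and induct on the number of vertices. Pick a vertex $v$ such that $G-v$ is still $P_4$-free. By induction, $G-v$ or its complement is disconnected; using complementation symmetry, assume $G-v$ has components $D_1,\dots,D_r$ with $r\ge 2$. Since $\overline{G}$ is connected, $v$ must have a non-neighbour $u$, say $u\in D_1$. Since $G$ is connected, $v$ has a neighbour in every $D_i$. Choose a neighbour $w$ of $v$ in some $D_j$ with $j\neq 1$. Inside $D_1$, walk along a path from a neighbour of $v$ to $u$ to find adjacent vertices $x,y\in D_1$ with $vx\in E$ and $vy\notin E$. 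Then $y\,x\,v\,w$ is an induced $P_4$, a contradiction.
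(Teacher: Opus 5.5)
\textbf{Gap: Proposition~\ref{cycle} fails for $\A=\{P_4\}$.} Your reduction is exactly the paper's. Your self-contained proof that finite cographs are the finite $P_4$-free graphs is correct, including the lemma that a $P_4$-free graph on at least two vertices is disconnected or has disconnected complement. The problem is the step everything rests on: Proposition~\ref{cycle} with $\A=\{P_4\}$ is false. The class of finite cographs does not have large-girth amalgamation, and your own lemma shows it.

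Take $B=C_4$, enumerated as $(b_1,b_2,b_3,b_4)$ with non-edges exactly $b_1b_2$ and $b_3b_4$. Given any $g$, let $G$ be a connected $4$-regular graph of girth at least $g$ (Erd\H{o}s--Sachs).
\begin{itemize}
\item \emph{The hypergraph.} Take the edges of $G$ as variables. For each vertex $w$ of $G$, use one $4$-tuple listing the four edges at $w$. Two such stars share at most one variable, and Berge cycles of this hypergraph are cycles of $G$, so its girth is at least $g$.
\item \emph{The tuples.} Fix an Euler circuit of $G$. At each $w$ it splits the four edges into two transition pairs $\{\alpha_w,\beta_w\}$ and $\{\gamma_w,\delta_w\}$; use the tuple $(\alpha_w,\beta_w,\gamma_w,\delta_w)$.
\item \emph{No realization is a cograph.} The prescribed non-edges contain all transition pairs, i.e.\ the Euler circuit, so they form a connected graph on all realized points. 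The prescribed edges contain a $4$-cycle on each star, and the stars form a connected hypergraph, so these too form a connected spanning graph. Hence the realized graph and its complement are both connected, and by your lemma it contains an induced $P_4$.
\end{itemize}
So no $n_{C_4}$ exists, and Theorem~\ref{large-girth} cannot be applied to cographs.

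The paper's proof of Proposition~\ref{cycle} breaks at the claim that $a_1,a_3,\dots,a_m$ lie in a single $G_i$. Non-adjacency only puts each pair $\{a_1,a_j\}$ in \emph{some} copy, possibly different ones. For example, take $B=\overline{K_2}$ and tuples $(a_1,a_3),(a_1,a_4),(a_4,a_2)$, which have no Berge cycles at all. The graph built in that proof is then exactly the path $a_1a_2a_3a_4$.

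\textbf{Repair via lexicographic products.} The statement itself can be rescued along the lines of Proposition~\ref{A[A]}. Cographs are closed under lexicographic products: $P_4$ has no nontrivial module, so an induced $P_4$ in $A[D]$ lies in one fibre or projects to an induced $P_4$ in $A$. Proposition~\ref{A[A]} therefore already gives the canonical point-Ramsey property.

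For the finite sunflower property, run the same greedy construction with all $s$ colourings at once.
\begin{itemize}
\item Choose a cograph $D'$ every $(|B|-1)$-colouring of which has a monochromatic copy of $B$. Then choose a cograph $D$ every $s$-colouring of which has a monochromatic copy of $D'$; both exist by indivisibility, \cite[Thm 4.2]{g}.
\item Let $C=B[D]$ with $V_i=\{b_i\}\times D$. When choosing $d_i$, call $d\in D$ bad for $r$ if $\chi_r(b_i,d)$ equals some $\chi_r(b_j,d_j)$ with $j<i$.
\item If some $d$ is bad for no $r$, take it as $d_i$.
\item Otherwise colour each $d$ by the least $r$ for which it is bad. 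This gives a copy of $D'$ on which some $\chi_r(b_i,\cdot)$ takes at most $i\le|B|-1$ values, hence a $\chi_r$-monochromatic copy of $B$ inside $V_i$.
\end{itemize}
If the construction runs to the end, $\{(b_i,d_i)\}_{i<|B|}$ is a transversal copy of $B$ that is heterochromatic for every $\chi_r$. This is the very canonical point-Ramsey property. The finite sunflower property then follows from \cite[Thm B]{2}, exactly as in the proof of Theorem~\ref{large-girth}.
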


Besides, we can actually generalize Proposition \ref{cycle} much further with the help of a new definition: for any graph $G=(V(G),E^G)$, we call it \textit{non-2-covered} if for any $x,y\in G$, $N(x)\cup N(y)\not= V(G)$ (i.e., $V(G)$ is not the union of the neighborhood of two elements). The proof of the following theorem is almost the same as that of Proposition \ref{cycle}: if the graph $G$ we construct for $B$ in the proof is not $\A$-free, then one hyperedge (isomorphic to $B$) is not $\A$-free by non-2-coveredness. 

\begin{theorem}\label{non-2}
    Let $\A$ be a class of finite non-2-covered graphs, then $H_\A$ has the finite  sunflower property and the canonical point-Ramsey property.
\end{theorem}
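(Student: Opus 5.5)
The plan is to verify that $H_\A$ has large-girth amalgamation with $n_B=3$, copying the construction in the proof of Proposition \ref{cycle}, and then to invoke Theorem \ref{large-girth} and Proposition 4.3.

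\textbf{The construction.} Fix $B\in H_\A$ with an enumeration $(b_1,\dots,b_n)$, $n\geq 2$. Take copies $\Bar{x}_1,\dots,\Bar{x}_k$ of its quantifier-free type whose hypergraph has girth $\geq 3$. Build $G$ as the union of the graphs $G_i\cong B$ on the variable sets $V(G_i)$. Then add an edge between any two vertices that do not lie together in a common $G_l$.

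\textbf{Realizing the types.} Girth $\geq 3$ gives $|V(G_i)\cap V(G_j)|\leq 1$ for $i\neq j$. So no pair of vertices lies in two different $G_i$, and the edge relation on each pair is determined unambiguously. Hence $G|_{V(G_i)}=G_i$, and $G$ realizes $p(\Bar{x}_1)\cup\dots\cup p(\Bar{x}_k)$. Note that two distinct vertices of $G$ are non-adjacent only if they lie in a common $G_i$ and are non-adjacent there.

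\textbf{The main step: $G$ is $\A$-free.} Suppose $A'\subseteq G$ is an induced copy of some $A\in\A$. I want to show that all vertices of $A'$ lie in a single $G_i$. Then $B\cong G_i$ contains a copy of $A$, contradicting $B\in H_\A$.

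First, the case $|A'|\le 1$ cannot occur. A graph on at most one vertex is covered by $N(x)\cup N(x)$ if we count $x\in N(x)$, and is vacuously covered if it is empty. So non-2-covered graphs have at least two vertices.

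For the main argument, I would read $N(x)$ in the definition of non-2-covered as the closed neighbourhood, so that $x\in N(x)$. This is the reading consistent with $P_4$ and $C_5$ being examples.

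Define a graph $\Delta$ on $V(A')$ by joining two vertices if they are distinct and non-adjacent in $A'$. Each such pair lies in a common $G_i$, and by the intersection bound that $G_i$ is unique. Non-2-coveredness says that for all $x,y$ there is $z$ non-adjacent to both and distinct from both.

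\emph{Closing triangles.} Suppose $u,v,w$ are pairwise $\Delta$-adjacent, with $\{u,v\}\subseteq G_i$, $\{v,w\}\subseteq G_j$ and $\{u,w\}\subseteq G_l$. If these three indices are not all equal, we get a Berge cycle of length $2$ or $3$ among the hyperedges. Length $2$ is excluded by the intersection bound. Length $3$ is excluded by girth $\geq 3$, provided "girth $\geq n_B$" forbids cycles of length $<3$. If it does not, I would take $n_B=4$ instead. So the three pairs lie in one $G_i$.

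\emph{Spreading to all of $A'$.} Fix an edge $xy$ of $\Delta$; one exists by non-2-coveredness. Let $G_i$ be the copy containing $\{x,y\}$.
\begin{itemize}
\item Take $z$ non-adjacent to both $x$ and $y$. Then $x,y,z$ form a $\Delta$-triangle, so $z\in G_i$.
\item Now take any vertex $w$. Apply non-2-coveredness to the pair $(w,x)$ to get $z'$ non-adjacent to both $w$ and $x$.
\item If $z'$ is also $\Delta$-adjacent to $y$ or to $z$, the triangle argument places $z'$, and then $w$, in $G_i$.
\end{itemize}

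\textbf{The expected obstacle.} The last case analysis is where I expect difficulty. The triangle argument needs a second vertex of $G_i$ that is non-adjacent to $z'$, and that is not automatic. My plan is to iterate: choose witnesses for the pairs $(w,x)$, $(w,y)$ and so on, and use connectivity of the witnesses through $\Delta$-triangles to force everything into $G_i$.

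Failing that, I would exploit a weaker structure. Any $\Delta$-path whose consecutive edges lie in different $G_i$'s, and which closes up, produces a Berge cycle. Since the hyperedges have large girth (taking $n_B$ larger if necessary), the "which $G_i$" labelling of $\Delta$-edges is forced to be constant on each $\Delta$-connected component. I would then show that $\Delta$ is connected: if $\Delta$ split into parts $P\cup Q$, then every $x\in P$ and $y\in Q$ would be adjacent in $A'$ to all of $Q$ and all of $P$ respectively, so $N(x)\cup N(y)=V(A')$, contradicting non-2-coveredness.

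With $\Delta$ connected and labelled by a single $G_i$, all of $A'$ lies in $G_i$. This gives the contradiction and completes the argument.
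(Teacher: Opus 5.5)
\textbf{Verdict: genuine gap.} Your route is the paper's. You reuse the construction from Proposition \ref{cycle}: the union of the copies $G_i\cong B$, with every pair not lying in a common $G_l$ made adjacent. Then you use non-2-coveredness to force any induced copy $A'$ of some $A\in\A$ into a single $G_i$. The paper asserts this last step in one sentence, and that step is where the gap is.

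\textbf{Why the argument stalls.} ``Every $\Delta$-triangle is monochromatic'' together with ``$\Delta$ is connected'' does not force the labelling to be constant. Girth controls labels only along cycles of $\Delta$, so the label can change at a cut vertex of $\Delta$. Taking $n_B$ larger does not help either: $n_B$ may depend only on $B$, while $\A$ may contain arbitrarily large graphs, so you cannot make all relevant cycles of $\Delta$ shorter than the girth.

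\textbf{The step is false.} Under your closed reading, let $A=K_1+C_4$, with isolated vertex $p$ and $4$-cycle $abcd$ (edges $ab,bc,cd,da$).
\begin{itemize}
\item $A$ is non-2-covered: $N[p]\cup N[v]$ misses the cycle vertex opposite $v$, and any union avoiding $N[p]$ misses $p$.
\item $B=\overline{K_3}$ is $A$-free. Take the hyperedges $(p,a,c)$ and $(p,b,d)$. They meet only in $p$, so there are no Berge cycles and any $n_B$ is satisfied.
\item The construction adds exactly the edges $ab,ad,cb,cd$, so $G\cong K_1+C_4$.
\end{itemize}
Here $\Delta$ is the bowtie made of the triangles $\{p,a,c\}$ and $\{p,b,d\}$. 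They are glued at the cut vertex $p$ and carry different labels. So no choice of $n_B$ repairs the argument. A proof under this reading would need a different rule for the cross edges, and it is not clear the statement survives.

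\textbf{The reading is also a problem.} Under the closed reading, $P_4$ and $C_5$ are 2-covered: $N[a_2]\cup N[a_3]$, respectively $N[a_1]\cup N[a_3]$, is the whole vertex set. This contradicts your claim that the closed reading makes them examples. In fact $P_4$ is 2-covered under either reading. The open reading makes $C_n$ non-2-covered exactly for $n\geq 5$, which matches the paper's remark about $C_3$ and $C_4$. Under that reading the statement is false outright:
\begin{itemize}
\item $K_2+K_1$ is non-2-covered.
\item $H_{\{K_2+K_1\}}$ is the class of complete multipartite graphs.
\item Colouring each vertex by its part leaves no monochromatic and no heterochromatic copy of $P_3$.
\end{itemize}
So the canonical point-Ramsey property fails, and with it the finite $2$-sunflower property.

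\textbf{Minor point.} Girth $\geq 3$ does not exclude Berge $3$-cycles, so $n_B\geq 4$ is genuinely required for your triangle step, not an optional hedge. For instance, with $A=\overline{K_3}$, $B=\overline{K_2}$ and three hyperedges forming a Berge triangle, the construction returns $\overline{K_3}$ itself.
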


However, one may wonder what if $\A$ includes $C_3$ or $C_4$ (none of them is non-2-covered). It turns out that we can prove a similar result in that case if $\A$ is finite and contains only cycles:

\begin{proposition}
    Let $\A\subseteq \{C_n\mid n\geq 3\}$ be finite. Then $H_\A$ has large-girth amalgamation and thus has the finite sunflower property and the canonical point-Ramsey property.
\end{proposition}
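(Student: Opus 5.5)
Let $N$ be the maximal length of a cycle in $\A$; $N$ exists because $\A$ is finite. Fix $B\in H_\A$ with enumeration $(b_1,\dots,b_n)$, $n\geq 2$, and set $n_B=N+1$. Given types $p(\Bar{x}_1),\dots,p(\Bar{x}_k)$ as in the definition of large-girth amalgamation, with girth of the hypergraph $\geq N+1$ (in particular $\geq 3$), build $G_i\cong B$ on $V(G_i)$ exactly as in the proof of Proposition \ref{cycle}. Unlike there, take $G=\bigcup_i G_i$ with \emph{no} extra edges between different $G_i$'s: this is the free amalgamation along the hypergraph. Since the girth is at least $3$, $|V(G_i)\cap V(G_j)|\leq 1$ for $i\neq j$. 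Hence any pair of vertices lies in at most one $G_i$, so $G|_{V(G_i)}=G_i\cong B$ and $G$ realizes $p(\Bar{x}_1)\cup\dots\cup p(\Bar{x}_k)$. The cross-edges of Proposition \ref{cycle} cannot be used here, since they would create triangles and $4$-cycles; instead we use the large girth to exclude short cycles.

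It remains to show that $G$ is $\A$-free. Suppose $G$ contains an induced cycle $C_m$ with $C_m\in\A$, so $m\leq N$, on vertices $a_1,\dots,a_m$. Each edge $a_ja_{j+1}$ of $G$ lies in a unique $G_{\ell(j)}$. If all the $\ell(j)$ coincide, the cycle sits inside one $G_i\cong B$, contradicting $B\in H_\A$. Otherwise, split the cyclic sequence into maximal arcs of consecutive edges with the same label $\ell$. The endpoints of these arcs are distinct vertices $v_1,\dots,v_t$ with $t\geq 2$, and the arcs are consecutively labelled by hyperedges $e_1,\dots,e_t$ with $v_r\in e_{r}\cap e_{r+1}$ (indices mod $t$). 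When the $e_r$ are pairwise distinct, this is a Berge cycle of length $t\leq m\leq N<n_B$, which contradicts the girth assumption. The case $t=2$ gives a Berge $2$-cycle, namely two hyperedges sharing two vertices, and is excluded by the intersection bound.

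The main obstacle is that the hyperedges along the cyclic sequence need not be distinct: the cycle may leave $G_i$, wander, and return to $G_i$ later. To handle this, pick a shortest closed subwalk of the arc structure between two occurrences of the same hyperedge label, or more simply note the following. If $e_r=e_{r'}$ with $r<r'$ non-adjacent, then the portion of the cycle from $v_r$ to $v_{r'-1}$ leaves and re-enters $e_r$. Take a minimal such excursion, i.e.\ a segment whose labels strictly between the two occurrences of $e$ are pairwise distinct and different from $e$. Its endpoints are two distinct vertices of $e$, since $a_1,\dots,a_m$ are distinct. Together with $e$, this segment yields a Berge cycle of length at most $m\leq N$. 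Verifying that the excursion's endpoints are distinct and that the intermediate hyperedges can be chosen pairwise distinct (by iterating minimality) is the technical step. After that, every case contradicts girth $\geq N+1$, so $G\in H_\A$.

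Hence $H_\A$ has large-girth amalgamation. By Theorem \ref{large-girth} it has the very canonical point-Ramsey property and the finite sunflower property, and by Proposition 4.3 it has the canonical point-Ramsey property.
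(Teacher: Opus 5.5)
Your proposal is correct and follows the paper's route: take the free union of the $G_i$ along the hypergraph, and turn any induced $C_l\in\A$ that is not contained in a single $G_i$ into a Berge cycle of length at most $l$. Your $n_B=N+1$ works just as well as the paper's $m+3$. The step you flag as technical is immediate: pick $r<r'$ with $e_r=e_{r'}$ and $r'-r$ minimal, so $v_r,e_{r+1},v_{r+1},\dots,v_{r'-1},e_{r'}$ is a Berge cycle of length $r'-r\geq 2$. In handling repeated hyperedges you are more careful than the paper, which simply asserts that the closed walk $a_1G_{i_1}a_2\cdots G_{i_l}a_1$ is itself a Berge cycle.
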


\begin{proof}
    Let $m$ be the largest number such that $C_m\in \A$. For any $B\in H_\A$, let $n_B=m+3$ and $b_1,...,b_n$ be an enumeration of $B$ without repetition. For any $p(\Bar{x}_1)=...=p(\Bar{x}_k)=\mathrm{qftp}((b_i)_{i<n})$ where $|\Bar{x}_i\cap \Bar{x}_j|<n$ for any $i\not=j$ and the smallest length of Berge cycles in $\Bar{x}_i$'s is $\geq n_B$, we construct a graph $G$ as follows: for any $1\leq i\leq k$, let $V(G_i)=\{x_{i1},...,x_{in}\}$ where $\Bar{x}_i=(x_{i1},...,x_{in})$ and $(x_{ij},x_{ij'})\in E^{G_i}$ holds if and only if $E(x_{ij},x_{ij'})\in p(\Bar{x}_i)$ for any $1\leq j,j'\leq n$. Let $V(G)=\bigcup_{1\leq i\leq k}V(G_i)$ and $E^G= \bigcup_{1\leq i\leq k} E^{G_i}$. Namely, $G=(V(G), E^G)$ is the union of $G_i$'s. Clearly, $G_i\cong B$ for any $1\leq i\leq k$.  As the smallest length of Berge cycles in $\Bar{x}_i$'s is $\geq n_B\geq 3$, we know that $|V(G_i)\cap V(G_j)|\leq 1$ for $i\not =j$. Hence $G|_{V(G_i)}=G_i$ for any $1\leq i\leq k$. Thus $G$ realizes $p(\Bar{x}_1)\cup...\cup p(\Bar{x}_k)$. 

    Now suppose $G\not\in H_\A$. Then there is $C_l\in \A$ such that $G$ contains a copy of $C_l$ (note $l\leq m < n_B$). Namely there exist $a_1,...,a_l\in G$ such that $(a_i,a_j)\in E^G$ if and only if $|i-j|=1\text{ mod } l$. For each $(a_j,a_{j'})$ where $|j-j'|=1\text{ mod } l$, we have $(a_j,a_{j'})\in E^G$ and thus $\{a_j,a_{j'}\}\subseteq G_i$ for some (unique) $i$ by our definition of $G$. Hence we have $a_1G_{i1}a_2...a_{il-1}G_{i{l-1}} a_l G_{il}a_1$ where $\{a_1,a_2\}\subseteq G_{i1}$,...,$\{a_{l-1}, a_l\}\subseteq G_{i{l-1}},\{a_l, a_1\}\subseteq G_{il}$ and $(a_1,a_2)\in E^{G_{i1}}$,..., $(a_{l-1},a_l)\in E^{G_{il-1}}, (a_l,a_1)\in E^{G_il}$. We know that  $i_1,...,i_l$ can't all be the same as $G_i\cong B$ for any $1\leq i\leq k$ and $B$ is $C_l$-free. Hence $a_1G_{i1}a_2...G_{i{l-1}} a_l G_{il}a_1$ gives us a Berge cycle of length $\leq l<n_B$, contradicting our assumption about $\Bar{x}_i's$. Therefore, $G\in H_\A$. As $G$ realizes $p(\Bar{x}_1)\cup...\cup p(\Bar{x}_k)$ and $B\in H_\A$ is arbitrary, this proves that $H_\A$ has large girth amalgamation. Hence by Theorem \ref{large-girth} and Proposition 4.3, we have that $H_\A$ has the finite sunflower property and the canonical
point-Ramsey property.
    \end{proof}

\begin{corollary}
    Let $\A\subseteq\{\overline{C_n}\mid n\geq 3\}$ be finite. Then $H_\A$ has the finite sunflower property and the canonical point-Ramsey property.
\end{corollary}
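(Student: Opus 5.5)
The natural route is to reduce to the preceding proposition on $H_{\mathcal{B}}$ for finite $\mathcal{B}\subseteq\{C_n\mid n\geq 3\}$ by complementation. Put $\mathcal{B}=\{\overline{A}\mid A\in\A\}$. Since $\overline{\overline{C_n}}=C_n$, we have $\mathcal{B}\subseteq\{C_n\mid n\geq 3\}$, and $\mathcal{B}$ is finite because $\A$ is. So the preceding proposition applies: $H_{\mathcal{B}}$ has large-girth amalgamation, hence the finite sunflower property and the canonical point-Ramsey property.

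Next I will check that complementation sends $H_{\mathcal{B}}$ to $H_\A$. A finite graph $G$ contains an induced copy of $A$ exactly when $\overline{G}$ contains an induced copy of $\overline{A}$. This holds because complementation commutes with taking induced subgraphs and preserves isomorphism. Hence $G$ is $\A$-free iff $\overline{G}$ is $\mathcal{B}$-free, so $H_\A=\{\overline{G}\mid G\in H_{\mathcal{B}}\}$.

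The remaining step is to justify the remark that both properties are preserved under complementation. Both are quantified over copies, meaning induced substructures isomorphic to a given graph, together with colourings of vertices or $n$-set labellings. Neither refers to edges directly. Given $A\in H_\A$, take the witness $B$ for $\overline{A}$ in $H_{\mathcal{B}}$ and use $\overline{B}$ as the witness for $A$.

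\begin{itemize}
\item For a point colouring of $\overline{B}$, view it as a colouring of $B$ on the same vertex set. A monochromatic or heterochromatic copy of $\overline{A}$ in $B$ is then, on the same vertex set, a copy of $A$ in $\overline{B}$ with the same colour behaviour.
\item For the sunflower property, a structure on $n$-sets isomorphic to $\overline{B}$ becomes, after complementing edges on the same $n$-sets, a structure isomorphic to $B$. The sunflower copy of $\overline{A}$ found there is a sunflower copy of $A$ in the original, since being a sunflower depends only on the underlying sets.
\end{itemize}

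No step is a real obstacle. The only care needed is stating that complementation is a bijection on the ground set that preserves induced-subgraph relations, and noting that $\overline{C_3}$ and similar graphs cause no degenerate issue.
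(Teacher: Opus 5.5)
Your proof is correct and follows the paper's route: the corollary comes from the preceding proposition for finite $\A\subseteq\{C_n\mid n\geq 3\}$ together with the remark that these properties are preserved when a class of graphs is complemented, and you verify that preservation explicitly. The only minor difference is that you check the canonical point-Ramsey property directly under complementation, whereas the paper obtains it from the finite $2$-sunflower property via the cited implication from \cite{2}.
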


In fact, for the canonical point-Ramsey property, we can get a more general result by adopting the proof strategy in \cite{g}, which strengthens one of their main results \cite[Prop 4.6]{g}. We start with the following definition, which is the finite version of indivisibility\footnote{See Definition \ref{indiv}.}.

\begin{definition}
    A class of finite structures $\K$ is indivisible if and only if for all $A\in \K$ and positive integers $k$, there is $B\in \K$ such that for every $k$-coloring of $B$, $B$ has a monochromatic copy of $A$. 
\end{definition}

\begin{definition} Let $L$ be a relational language where each relational symbol is at least binary.
    For any finite irreflexive\footnote{Namely, for any $R\in L$ of arity $n\geq 2$, $(a_1,...,a_n)\in R^C$ implies that $a_i$'s are pairwise distinct.} $L$-structures $A$ and $B$, the \textit{lexicographic product} of $A$ and $B$, denoted as $A[B]$, is the $L$-structure whose underlying set is $A\times B$ with $R^{A[B]}$ where $R\in L$ of arity $n$ defined as follows: for any $(a_1,b_1),..., (a_n,b_n)\in A\times B$, $((a_1,b_1),...,(a_n,b_n))\in R^{A[B]}$ if and only if either $(a_1,...,a_n)\in R^A$ or $a_1=...=a_n$ and $(b_1,...,b_n)\in R^B$.
\end{definition}

With the help of the above construction, we are able to show the following proposition for the canonical point-Ramsey property.

\begin{proposition}\label{A[A]}
    Let $L$ be a relational language where each relational symbol is at least binary and let $\K$ be a class of irreflexive finite $L$-structures. If for any $A\in \K$, $A[A]\in \K$, then $\K$ has the canonical point-Ramsey property.
\end{proposition}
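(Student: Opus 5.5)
Given $A\in\K$, I take $B=A[A]$, which lies in $\K$ by hypothesis, and show that every coloring $\chi:A\times A\rightarrow\mathbb{N}$ admits a monochromatic or a heterochromatic copy of $A$. The first step is to identify the natural copies of $A$ inside $A[A]$. For each $a\in A$ the fiber $F_a=\{a\}\times A$ is isomorphic to $A$ via $b\mapsto(a,b)$: a tuple from $F_a$ has constant first coordinate, so it lies in $R^{A[A]}$ exactly when the second coordinates form a tuple in $R^A$. Next, for any choice function $g:A\rightarrow A$, the transversal $T_g=\{(a,g(a))\mid a\in A\}$ is isomorphic to $A$ via $a\mapsto(a,g(a))$. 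To see this, take a tuple $((a_1,g(a_1)),\dots,(a_n,g(a_n)))$ with $n\ge 2$. If the $a_i$ are not pairwise distinct, irreflexivity of $A$ gives $(a_1,\dots,a_n)\notin R^A$. The alternative clause of the definition would require $a_1=\dots=a_n$, which forces all the points to coincide, so irreflexivity of $A$ again rules out membership via the second coordinates. Hence membership in $R^{A[A]}$ reduces exactly to $(a_1,\dots,a_n)\in R^A$. This is the only place where irreflexivity and the assumption that all relations are at least binary are used, and I expect checking it carefully to be the main technical point.

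The combinatorial step is a simple dichotomy. If some fiber $F_a$ is monochromatic under $\chi$, it is a monochromatic copy of $A$ and we are done. Otherwise every fiber contains at least two colors. I then construct $g$ greedily, following an enumeration $a_1,\dots,a_m$ of $A$: at step $i$, choose $g(a_i)$ so that $\chi(a_i,g(a_i))$ differs from the colors $\chi(a_j,g(a_j))$ already used for $j<i$.

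This greedy choice requires enough distinct colors in each fiber, and that is the obstacle. Two colors per fiber is not sufficient in general, so the dichotomy has to be sharpened. There are two options. The first is to iterate the product, using $A[A][A]$ or nested versions, but closure under products is only given for $A[A]$ itself. The second, which I would try first, is to apply the argument to a larger copy. Note that $\K$ is closed under $X\mapsto X[X]$, and the product $X[X]$ contains $A[A]$ whenever $A$ embeds in $X$, for instance $X=A[A]$. Then I use a Ramsey-style dichotomy on the fibers of $X[X]$, indexed by $x\in X$. For each fiber, either its coloring restricted to $X$ yields a monochromatic copy of $A$, or it uses many colors. In the second case I find within it a heterochromatic copy of $A$ by induction on $|A|$, which requires carrying a hypothesis of the form ``every coloring of $A$-sized structures of the right shape has a monochromatic or heterochromatic copy''.

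Concretely, I would set up an induction on $|A|$ using the lexicographic structure, splitting $A[A]$ into the fibers and the quotient $A$. Either some fiber carries a monochromatic copy of $A$, or each fiber yields a color avoiding a prescribed finite set, and from these choices I assemble a heterochromatic transversal. The ``avoid a finite set'' requirement is where I expect the real difficulty, and I would first check whether the counting works out with $|A|$ colors per fiber.
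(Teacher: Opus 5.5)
Your fiber and transversal analysis is correct, including the irreflexivity check showing $\{(a,g(a))\mid a\in A\}\cong A$, and the greedy construction of a heterochromatic transversal is the right endgame. But the proposal stops exactly where an idea is needed, so as it stands there is a genuine gap. With $B=A[A]$ each fiber is a single copy of $A$, and ``not monochromatic'' only guarantees two colors in it, which cannot feed the greedy step.

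The missing ingredient is to use as fiber a structure $B\in\K$ such that every coloring of $B$ with at most $|A|-1$ colors has a monochromatic copy of $A$. This is finite indivisibility of $\K$, which the paper quotes from Guingona et al. Work in $A[B]$. When you pick the point over $a_i$ you must avoid the $i-1$ colors already used. If $\{a_i\}\times B\cong B$ carried at most $i-1\le|A|-1$ colors, it would contain a monochromatic copy of $A$. So either you find a monochromatic copy or the greedy never gets stuck. Finally, the lexicographic product is functorial in embeddings and $A$ embeds in $B$, so $A[B]$ embeds into $B[B]\in\K$, and $B[B]$ is the actual witness.

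You abandoned iteration because closure is only given for $X\mapsto X[X]$, but that is not a real obstruction. As you yourself note, $X[X]$ contains $A[Y]$ whenever $A$ and $Y$ embed in $X$, and that is all that is needed. Put $C_1=A$ and $C_k=C_{k-1}[C_{k-1}]\in\K$. Then every $k$-coloring of $C_k$ has a monochromatic copy of $A$, by induction on $k$ inside the copy of $A[C_{k-1}]$. Either some fiber misses a color and the induction hypothesis applies to it. Or every fiber contains a point of one fixed color, and choosing one such point per fiber gives a monochromatic transversal. Taking $B=C_{|A|-1}$ closes the gap.

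Note that this induction is on the number of colors and concerns only monochromatic copies. Your proposed induction on $|A|$ carrying a canonical hypothesis, and your plan to find a heterochromatic copy inside a single fiber, both point in the wrong direction. The heterochromatic copy always comes from a transversal; the fibers only have to supply enough colors.
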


\begin{proof}
    It is easy to check that $\K$ is indivisible \cite[Thm 4.2]{g}. Let $A\in \K$ be arbitrary, by indivisibility of $\K$, we have $B\in \K$ such that for any $(|A|-1)$-coloring of $B$, B has a monochromatic copy of $A$ inside $B$ (without loss of generality assume $|A|\geq 2$). Say $A=\{a_1,...,a_n\}$ where $n\geq 2$. Now for any coloring $\chi: A[B]\rightarrow \mathbb{N}$, suppose $B$ has no monochromatic copy of $A$. We recursively construct a heterochromatic copy of $A$. Pick $(a_1,b_1)\in A[B]$. Suppose $(a_1,b_1),...,(a_i,b_i)$ have already be chosen such that their colors are pairwise distinct where $1\leq i<n$. If $|\chi(\{a_{i+1}\}\times B)|\leq i\leq |A|-1$, then by our assumption about $B$, we get a monochromatic copy of $A$ inside $\{a_{i+1}\}\times B$ as $\{a_{i+1}\}\times B\cong B$, which contradicts our assumption. Therefore, $|\chi(\{a_{i+1}\}\times B)|>i$. We pick $(a_{i+1},b_{i+1})\in \{a_{i+1}\}\times B$ such that $\chi((a_{i+1},b_{i+1}))$ is different any one of $\chi((a_1,b_1)),..., \chi((a_i,b_i))$. Continuing this process, in the end we get $(a_1,.b_1),...,(a_n,b_n)\in A[B]$ such that their colors are pairwise distinct. By the definition of $A[B]$, one can easily see that $\{(a_1,b_1),...,(a_n,b_n)\}\cong A$. Therefore, we have a heterochromatic copy of $A$ inside $A[B]$. Thus for any coloring $\chi: A[B]\rightarrow \mathbb{N}$, there is a monochromatic or heterochromatic copy of $A$ inside $A[B]$. Clearly $A$ is embeddable into $B$, and thus $A[B]$ is embeddable into $B[B]$. Therefore, for any $\chi: B[B]\rightarrow \mathbb{N}$, there is a monochromatic or heterochromatic copy of $A$ inside $B[B]$ as well. As $B\in\K$, by the assumption we have that $B[B]\in \K$. As $A\in \K$ is arbitrary, this proves that $\K$ has the canonical point-Ramsey property. 
\end{proof}

As indicated above \footnote{See remarks after Corollary 4.5.}, indivisibility does not imply the the canonical point-Ramsey property in general\footnote{For graphs, take the class of all finite graphs which are disjoint unions of complete graphs (i.e., $Age(\omega K_\omega)$) as an example.}. Thus the above result does not follow from \cite[Thm 4.2]{g}. And since the canonical point Ramsey property clearly implies indivisibility (as long as every structure of size 2 in the class can be embedded into one bigger structure in that class), Proposition \ref{A[A]} is stronger than \cite[Thm 4.2]{g}. 

By the above result, we also get the following strengthening of \cite[Prop 4.6]{g}.

\begin{proposition}
    Let $\A\subseteq \{P_n\mid n\geq 4\}\cup\{C_n\mid n\geq 5\}\cup \{\overline{P_n}\mid n\geq 4\}\cup\{\overline{C_n}\mid n\geq 5\}$. Then $H_\A$ has  the canonical point Ramsey property.
\end{proposition}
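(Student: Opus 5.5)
The plan is to apply Proposition \ref{A[A]} directly. For this we must check that for every $\A$ of the stated form and every $A\in H_\A$ we have $A[A]\in H_\A$; the language of graphs has only the binary irreflexive symbol $E$, so the hypotheses on $L$ and $\K$ hold. The key structural fact is that every induced subgraph of a lexicographic product $A[B]$ decomposes along the first coordinate. The vertices are grouped by their $A$-coordinate, and distinct groups are completely joined or completely non-joined according to adjacency in $A$. So a subgraph $S\subseteq A[A]$ that meets at least two groups is a module substitution: each group $S\cap(\{a\}\times A)$ is a module of $S$, i.e. every vertex outside the group is adjacent to all or none of it.

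Accordingly, I will show that every forbidden graph $F\in\{P_n\mid n\geq 4\}\cup\{C_n\mid n\geq 5\}$, and likewise every complement, is \emph{prime}: it has no nontrivial module, meaning a set $M$ with $2\leq |M|<|V(F)|$ such that every vertex outside $M$ sees all or none of $M$.

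Given primality, the argument runs as follows. Suppose $F\in\A$ embeds into $A[A]$ as $S$. If $S$ lies in a single group $\{a\}\times A\cong A$, then $F$ embeds into $A$, a contradiction. If every group meets $S$ in at most one vertex, then the projection to the first coordinate is an embedding of $S$ into $A$, again a contradiction. Otherwise some group meets $S$ in at least two but not all vertices, giving a nontrivial module of $F$, which contradicts primality. Hence $A[A]\in H_\A$, and Proposition \ref{A[A]} gives the canonical point-Ramsey property.

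The main, though routine, obstacle is verifying primality. Complements are handled immediately, because modules are preserved under complementation. For $P_n$ with $n\geq 4$: suppose $M$ is a module with $2\leq|M|<n$. Pick a vertex outside $M$ adjacent to some vertex of $M$; such a vertex exists since $P_n$ is connected. That vertex must be adjacent to all of $M$, and each vertex of a path has at most two neighbours. Checking the cases shows $M$ is either a pair of vertices at distance $2$, or, when $M$ is connected, a subpath, which has an outside neighbour seeing only one of its endpoints. In both cases we get a contradiction once $n\geq 4$. The pair at distance $2$ fails because one of its two vertices has a further neighbour that is not adjacent to the other, and this is exactly where $n\geq4$ is used, since $P_3$ is not prime. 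For $C_n$ with $n\geq5$ the same neighbourhood argument works: a connected proper arc has an endpoint neighbour outside it, and a pair at distance $2$ has outer neighbours distinguishing them when $n\geq 5$. This explains the bound, as $C_4$ is not prime.
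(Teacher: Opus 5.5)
Your proposal is correct and follows the paper's route. The paper also reduces the statement to Proposition \ref{A[A]} via closure of $H_\A$ under lexicographic products, but it simply cites \cite[Lem 2.7, 4.4 and 4.5]{g} for that closure, whereas you prove it directly: each fibre $\{a\}\times A$ meets an induced copy of $F$ in a module, and $P_n$ ($n\geq 4$), $C_n$ ($n\geq 5$) and their complements have no nontrivial modules. Your subpath/arc case is redundant, since a common outside neighbour already forces $M$ to be a pair at distance $2$, but it does no harm.
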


\begin{proof}
    By \cite[Lem 2.7, 4.4 and 4.5]{g}, we know that for any $A,B\in H_\A$, $A[B]\in H_\A$. Hence, by Proposition \ref{A[A]}, we get the result.
\end{proof}

In particular, as the class of all finite perfect graphs is $H_\A$ where $\A =\{C_n\mid n\geq 5\text{ is odd}\}\cup \{\overline{C_n}\mid n\geq 5\text{ is odd}\}$ by \cite[Thm 1.2]{p}, we have a strengthening of \cite[Cor 4.11]{g}:

\begin{proposition}
     The class of all finite perfect graphs has  the canonical point Ramsey property.
\end{proposition}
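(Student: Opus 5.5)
The plan is to apply Proposition \ref{A[A]} to the class $\K$ of all finite perfect graphs, in the same way the previous proposition handled $H_\A$ for $\A\subseteq \{P_n\mid n\geq 4\}\cup\{C_n\mid n\geq 5\}\cup \{\overline{P_n}\mid n\geq 4\}\cup\{\overline{C_n}\mid n\geq 5\}$. Graphs are structures in the language $\{E\}$ with $E$ binary, and simple graphs are irreflexive in the sense required there. So it suffices to check that $A[A]$ is perfect whenever $A$ is a finite perfect graph.

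First I identify the class. By the strong perfect graph theorem \cite[Thm 1.2]{p}, the finite perfect graphs are exactly $H_\A$ for $\A=\{C_n\mid n\geq 5\text{ odd}\}\cup\{\overline{C_n}\mid n\geq 5\text{ odd}\}$. This $\A$ is a subset of $\{C_n\mid n\geq 5\}\cup\{\overline{C_n}\mid n\geq 5\}$, so it falls under the hypothesis of the preceding proposition. Hence the result follows at once from that proposition, and in particular from \cite[Lem 2.7, 4.4 and 4.5]{g}, which give $A[B]\in H_\A$ for all $A,B\in H_\A$.

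To keep the argument self-contained, I would instead verify closure under lexicographic product directly. The key fact is that $C_n$ and $\overline{C_n}$ for $n\geq 5$ are prime graphs: they have no nontrivial module. Suppose $S$ is a copy of $C_n$ or $\overline{C_n}$ inside $A[B]$, and consider the first-coordinate fibres of $S$. Each fibre is a module of $S$, since every vertex outside the fibre is adjacent to all of it or to none of it, depending only on its $A$-coordinate.
\begin{itemize}
\item If every fibre is a singleton, then $S$ projects isomorphically into $A$.
\item If there is a single fibre, then $S$ embeds into $\{a\}\times B\cong B$.
\end{itemize}
Primality rules out every other arrangement, so $S$ already lies in $A$ or in $B$.

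The only step needing care is this primality check for $C_n$ and $\overline{C_n}$ with $n\geq 5$. A module in $C_n$ containing two vertices must contain all their neighbours, so by connectedness it is the whole vertex set. The case of $\overline{C_n}$ then follows because modules are preserved under complementation.

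Alternatively, I would cite Lovász's replication lemma: substituting perfect graphs for vertices of a perfect graph keeps it perfect, and this gives the closure immediately.
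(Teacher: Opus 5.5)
Your proposal is correct and matches the paper's argument. The strong perfect graph theorem identifies the finite perfect graphs with $H_\A$ for $\A$ the odd holes and odd antiholes of length at least $5$, and the preceding proposition (Proposition~\ref{A[A]} plus closure of such $H_\A$ under lexicographic products) then gives the canonical point-Ramsey property.

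Your extra self-contained closure argument via primality of $C_n$ and $\overline{C_n}$ for $n\geq 5$ is sound, but the primality step needs a line of justification: a vertex $w$ outside a nontrivial module and adjacent to it must be adjacent to all of it, which forces the module to be $\{w-1,w+1\}$, and the neighbour $w-2$ then violates the module condition. The alternative via Lov\'asz's substitution lemma is also valid and even bypasses the strong perfect graph theorem.
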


\noindent\textbf{Acknowledgments}.      The author is very grateful to his supervisor Nick Ramsey for helpful comments.

\end{document}